\newtheorem{thm}{Theorem}[section]
\newtheorem{cor}[thm]{Corollary}
\newtheorem{lem}[thm]{Lemma}
\newtheorem{prop}[thm]{Proposition}
\newtheorem{example}[thm]{Example}
\newtheorem{defn}[thm]{Definition}
\theoremstyle{remark}
\newtheorem{rem}[thm]{Remark}
\newcommand{\ah}[1]{{\color{red!50!blue} #1}}
\newcommand{\Z}{\mathbb Z}
\newcommand{\N}{\mathbb N}
\newcommand{\IFS}{$\mathfrak{I}=(X,\,\mathcal{F},\,\Sigma)$\ }
\numberwithin{equation}{section}
\begin{document}
	\title{Attractor and its self-similarities for an IFS over arbitrary sub-shift}
	\author{Dawoud AHMADI DASTJERDI\footnote{Corressponding author, dahmadi1387@gmail.com}, Sedigheh DARSARAEE
	}

	\maketitle	
	\date{}	
	\begin{abstract}
Consider a compact metric space $X$, and let $\mathcal{F}=\{f_1,\,f_2,\ldots,\, f_k\}$ be a set of contracting and continuous self maps on $X$. Let $\Sigma$ be a sub-shift on $k$ symbols, and let $\Sigma_k$ be the full shift.
 Define $\mathcal{L}_n(\Sigma)$ as the set of words of length $n$ in $\Sigma$. For $u=u_1\cdots u_n\in \mathcal{L}_n(\Sigma)$, set $f_u:=f_{u_n}\circ\cdots \circ f_{u_1}$ and $H^n(\cdot):=\cup_{u \in \mathcal{L}_{n}(\Sigma_k)} f_{u}(\cdot)$.
  When $\Sigma=\Sigma_k$, $H^n(\cdot)$ is the $n$th iteration of the Hutchinson's operator, and there exists a compact set 
 $S= \lim_{n \rightarrow \infty} H^n(A)$ for any compact $A\subseteq X$ with $H^n(S)=S$ (self-similarity criteria) for $n\in\N$.
 For arbitrary $\Sigma$, the above limit exists; but
   it is not necessarily true that $H^n(S)=S$.
Sufficient conditions on $\Sigma$ are provided to have $H^n(S)=S$ for all or some $n\in\N$, and then the dynamics of $S$ under the admissible iterations of $f_i$'s defined by $\Sigma$ are investigated.

\smallskip
\noindent \textbf{Keywords.} iterated function system,
Hausdorff distance, self-similarity, coded shifts
	\end{abstract}

\section{Introduction}
Let $X$ be a compact metric space and for $1\leq  i\leq k$, $f_i:X\rightarrow X$ a continuous map. 
Let $\Sigma_k$ be the full shift on $k$ symbols and let $\mathcal{L}(\Sigma_k)=\{u: u=u_1\cdots u_n\text{ is a word in } \Sigma_k, \,n\in\N\}$ its language. Then under the concatenation of words, 
$\mathcal{L}(\Sigma_k)$ is a semi-group  and the action of 
$\mathcal{L}(\Sigma_k)$ on $X$ defined by 
\begin{equation}\label{eq combination of maps}
(u_1\cdots u_n,\,x)\mapsto f_u(x):=f_{u_n}\circ\cdots \circ f_{u_1}(x)
\end{equation}
 is a topological dynamical system called a  \emph{``conventional" iterated function system} (IFS).
The IFS was first introduced by M. Barnsley and S. Demko for the scenario where $f_i$'s were contractions and $\Sigma$ a full shift on $k$ symbols \cite{barnsley1985iterated}. Building upon Hutchinson's earlier findings \cite{hutchinson1981fractals}, their goal was to devise a technique for generating fractals, or sets with non-integer Hausdorff dimensions. This concept gained prominence through the work of Peitgen and Richter \cite{peitgen1986beauty} and Barnsley et al. \cite{barnsley1988fractals}; notably, they produced digital images that evoke natural phenomena.
Now, many researchers believe that IFS has the potential to give  mathematical models for some elements in nature. See for instance \cite{dokukin2011cell} and \cite{leggett2019motility}.
These beliefs stem from the fact that chance plays a major role in nature and an IFS.
So it is reasonable to study IFS from different viewpoints;
namely, probabilistic, geometric-measure theory, and topological.
Here, we will focus primarily on topological aspects and defer the study of ergodic behavior for future research.

For us, an IFS is a triple \IFS where $\mathcal{F}$ is the
 set of above $k$ self-maps 
 that, unless otherwise specified, are contractions, and
  $\Sigma$ is an arbitrary sub-shift on $k$ symbols satisfying \eqref{eq combination of maps}. This seems natural; as one would anticipate that in general, certain sequences of map iterations are forbidden, so leading to a generic sub-shift.
 Note that under this framework, $\mathfrak{I}=(X,\,\mathcal{F},\,\Sigma_k)$ represents the conventional IFS.

For a conventional IFS, Hutchinson utilized the Hausdorff metrics on 
$\mathcal{C}=\{A\subseteq X:\; A \text{ is compact}\}$
 and introduced \emph{Hutchinson operator} $H:\mathcal{C}\rightarrow\mathcal{C}$ defined as
$H(A):=\bigcup_{a=1}^kf_a(A)$.
Consequently,
\begin{equation}\label{eq H^n(A)}
H^n(A)=\bigcup_{u\in\mathcal{L}_n(\Sigma_k)}f_u(A)
\end{equation}
holds true, where
 $\mathcal{L}_n(\Sigma_k)\subset \mathcal{L}(\Sigma_k)$ denotes the collection of words with length $n$ and $H^n(\cdot)$ is the $n$th iteration of $H$.
Subsequently, it was established that there exists  $S\in\mathcal{C}$ such that $\lim_n H^n(A)=S$,
 identifying this distinctive compact set as the \emph{attractor} of the IFS.
 Additionally, the relation
 \begin{equation}\label{eq self-similarity}
	H^n(S)=S
\end{equation}
was validated for all $n\in\N$.
 If $f_i$ is a contractive homeomorphism, 
then  \eqref{eq self-similarity} means that for $n\in\N$, $S$ has a self-similarity; that is, $S$ consists of $|\mathcal{L}_n(\Sigma_k)|=k^n$ parts all topologically similar to $S$. We call \eqref{eq self-similarity} the \emph{ self-similarity criteria}.

In this note, we consider  \IFS where $\Sigma$ stands for an arbitrary sub-shift on a  character set $\mathcal{A}$  such that if not specified $|\mathcal{A}|=k\geq 2$. 

 By substituting $\Sigma$ with $\Sigma_k$ in \eqref{eq H^n(A)}, we define 
 $H^n:\mathcal{C}\rightarrow\mathcal{C}$, which is no longer the iteration of $H$ as mentioned earlier.
In general, \eqref{eq self-similarity} loses its validity.
However, we demonstrate that if $\Sigma$ is a coded sub-shift, that is, a sub-shift generated by a set of words, then
\eqref{eq self-similarity} holds
 for infinitely many $n\in\N$.
 Furthermore, if $\Sigma$ is mixing, then for sufficiently large $n$,
  \eqref{eq self-similarity} is valid. 
  This validity will apply to all 
  $n$, if our 
    coded $\Sigma$
   possesses a specified fixed point. 
  Given the infinite number of subsystems of $\Sigma_k$, that are coded, in cases where 
  $f_u$'s are contractive homeomorphisms,
  there exists an infinite array of self-similarities within the attractor of a an IFS. This offers a basic understanding of why numerous distinct similarities emerge from a set of finite forces $\mathcal{F}$ acting on a space like our universe. 

Assuming \eqref{eq self-similarity} holds for all $n$ in our $\mathfrak{I}=(X,\,\mathcal{F},\,\Sigma)$ with the attractor $S$, then $\mathfrak{I}_S=(S,\,\mathcal{F},\,\Sigma)$ becomes an IFS as well, inheriting many dynamical properties from $\Sigma$. Moreover, for any $\sigma=\sigma_1\sigma_2\cdots\in\Sigma$, define $\mathcal{A}_\sigma$ as the set of characters in $\sigma$ and $\Sigma_\sigma$ as the subsystem of $\Sigma$ obtained from the closure of points in the orbit of $\sigma$.
Then, we consider $\mathfrak{I}_\sigma=(X,\,\mathcal{F}_\sigma,\,\Sigma_\sigma)$ with an attracting set $S_\sigma$ and the non-autonomous system $f_{1,\,\infty}$ with $f_i=f_{\sigma_i}$ and will demonstrate that these two systems share dynamics with $\mathfrak{I}$. Notably, $S=\cup_{\sigma\in\Sigma}S_\sigma$ and when $\sigma$ is transitive, the attracting set of $f_{1,\,\infty}$ is $S$.

\section{Preliminaries}\label{sec preliminaries}
A dynamical system represents an action of a semi-group on a set. However, our focus here is on the "classical" topological dynamical system, where $\N$ or $\Z$ acts on a compact metric space.
Our "non-classical" dynamical systems include iterated function systems and non-autonomous dynamical systems, which will be discussed later. To be precise, consider a compact metric space $X$ with a continuous map $f:X\rightarrow X$. Let $G\in\{\N,\,\Z\}$ and define $G\times X\rightarrow X$ as
$(n,\,x) \mapsto f^{n}(x):=\overbrace{ f \circ f \circ \cdots \circ f}^{n \text{ times}}(x)$.
The \emph{forward orbit} of $x\in X$ is
$\{x,\, f(x),\ldots ,\, f^{n}(x), \ldots \}$ and is denoted by $\mathcal{O}^+(x)$.
If $f$ is a homeomorphism, then $\mathcal{O}_-(x)$, the \emph{backward orbit} of $x$, is defined similarly, and $\mathcal{O}^+_-(x)=\mathcal{O}^+(x)\cup \mathcal{O}_-(x)$ represents the \emph{full orbit} of $x$.
Let $\mathcal{O}(x)$ denote either $\mathcal{O}^+(x)$ or $\mathcal{O}^+_-(x)$ depending on whether $G=\N$ or $G=\Z$ respectively.


\subsection{Symbolic dynamics}\label{symdyn}
We require symbolic dynamics. Therefore, we revisit certain necessary concepts, primarily from \cite{lind2021introduction}.

Consider the set $\mathcal{A}$ to be a non-empty finite set and let $\Sigma_{|\mathcal{A}|}= \mathcal{A}^{\mathbb{Z}}$ (resp. $ \mathcal{A}^{\mathbb{N}}$) represent the collection of all bi-infinite (resp. right-infinite) sequences of symbols from $\mathcal{A}$. The map $\tau$ on $\Sigma_{|\mathcal{A}|}$, known as the \emph{shift map}, is defined by $\tau(\sigma)= \sigma'$ where $\sigma'_{i}=\sigma_{i+1}$. The pair $(\Sigma_{|\mathcal{A}|},\tau)$ is called the \emph{full shift}, and any closed invariant subset $\Sigma$ of $\Sigma_{|\mathcal{A}|}$ is referred to as a \emph{sub-shift} or a \emph{shift space}. A \emph{word} or \emph{block} over $\mathcal{A}$ is a finite sequence of symbols from $\mathcal{A}$.
Denote by
$\mathcal{L}_{n}(\Sigma)$
the set of all admissible words of length $n$ and define
$\mathcal{L}(\Sigma)=\bigcup_{n=0}^{\infty} \mathcal{L}_{n}(\Sigma)$ 
as the \emph{language} of
$\Sigma$.
According to this definition, every word $u$ possesses a right prolongation. This implies that if $u\in\mathcal{L}(\Sigma)$, then there exists some $i\in\mathcal{A}$ such that $ui\in\mathcal{L}(\Sigma)$.
 The left prolongation occurs in sub-shifts of $\mathcal{A}^\Z$ and not necessarily in  $\mathcal{A}^\N$. 
For example, if $\Sigma=\{21^\infty,\,1^\infty\}\subset \{1,\,2\}^\N$, it forms a sub-shift that lacks left prolongation.
To simplify matters, it is often assumed that $\tau$ is totally invariant, meaning $\tau^{-1}(\Sigma)=\Sigma$. This implies that under this assumption, any $u\in\mathcal{L}$ will invariably possess a left prolongation.

For $u \in \mathcal{L}_{k}(\Sigma)$, let the \emph{cylinder} $_{l}[u]_{l+k-1}=[u_{l}\ldots u_{l+k-1}] $ be the set $\{\sigma= \cdots \sigma_{-1} \sigma_{0} \sigma_{1} \cdots \in \Sigma: \sigma_{l} \cdots \sigma_{l+k+1}=u\}$. Let $\bf{F}$ be a collection of blocks over $\mathcal{A}$, which we will think of as being the \emph{forbidden blocks}. For any such $\bf{F}$, define $\Sigma_{\bf{F}}$ to be the set of sequences in $ \mathcal{A}^{\mathbb{Z}} $ which do not contain any block in $\bf{F}$. Now we can say that a sub-shift is a subset $\Sigma$ of a full shift $\mathcal{A}^{\mathbb{Z}} $ such that $ \Sigma = \Sigma_{\bf{F}} $ for some collection $\bf{F} $ of forbidden blocks over $\mathcal{A}$. A shift space $\Sigma$ is \emph{irreducible} if for every ordered pair of words $u,\, v \in \mathcal{L}(\Sigma)$ there is a word $w \in \mathcal{L}(\Sigma)$ so that $uwv \in \mathcal{L}(\Sigma)$. A point $\sigma \in \Sigma$ is \emph{doubly transitive} if every word in $\Sigma$ appears in $\sigma$ infinitely often to the left and the right.

\begin{defn}\label{defn factor code} \cite[\S 1.5]{lind2021introduction}. Let $\Sigma$ (resp. $\Sigma'$) be a shift space over $\mathcal{A}$ (resp. $\mathcal{A}'$) and let $\varPhi : \mathcal{L}_{m+n+1}(\Sigma)\rightarrow \mathcal{A}'$. Then, $\varPhi$ induces a surjective map $\varphi:\Sigma \rightarrow \Sigma'$ ($\varphi = \varPhi^{[-m,\,n]}_{\infty} $) defined by $\sigma'=\varphi(\sigma) $ with $\sigma'_{i}= \varPhi (\sigma_{i-m} \sigma_{i-m+1} \ldots \sigma_{i+n})= \varPhi(\sigma_{[i-m,i+n]})$ which is called the \emph{sliding block code} or simply \emph{shift code} with memory $m$ and anticipation $n$.
\end{defn}
	The map $\varPhi$ is known as the \emph{block map}, and for the case where $m=n=0$, we refer to $\varphi$ as the \emph{1-block factor map}. Note that $\varphi$ factors $(\Sigma,\,\tau)$ over $(\Sigma',\,\tau)$ where $\tau$ represents the shift map. In this scenario, we can say that $\Sigma'$ is a factor of $\Sigma$. A sub-shift $\Sigma$ is considered \emph{conjugate} to $\Sigma'$, denoted by $\Sigma\simeq\Sigma'$, if $\Sigma$ (or $\Sigma'$) factors over $\Sigma'$ (or $\Sigma)$. For any shift code $\varphi:\Sigma\rightarrow\Sigma'$, there exists $\tilde{\Sigma}\simeq \Sigma$ and a $\tilde{\varphi}:\tilde{\Sigma}\rightarrow \Sigma'$ such that $\tilde{\varphi}$ is a 1-block factor map \cite[Proposition 1.5.12]{lind2021introduction}. Therefore, in the qualitative study of shift spaces, it is acceptable to assume that a factor code is a 1-block factor map.

Shift spaces defined by a finite set of forbidden blocks are termed \emph{shifts of finite type} (SFT), and their factors are known as \emph{sofic}.

A word $w \in \mathcal{L}(\Sigma)$ is regarded as \emph{synchronizing} if $uwv \in \mathcal{L}(\Sigma)$ whenever $uw,\,wv \in \mathcal{L}(\Sigma)$. A \emph{synchronized} system is an irreducible shift that possesses a synchronizing word. Every sofic system is synchronized, and all synchronized systems are coded:
\begin{defn}\label{defn coded}
	Let $\Sigma$ represent a shift space over $\mathcal{A}$, and let $\mathcal{W}\subset\mathcal{L}(\Sigma)$. We label the pair $(\Sigma,\,\tau)$ as a \emph{coded shift} if it is the closure of the set of sequences formed by freely concatenating words in $\mathcal{W}$. The set $\mathcal{W}$ is denoted as the \emph{generator} of the coded sub-shift $\Sigma$.
\end{defn}

Like sofics, factors of coded shifts are coded. Additionally, $(\Sigma,\,\tau)$ is coded if and only if $\Sigma$ contains an increasing sequence of irreducible SFT whose union is dense in $\Sigma$.

Coded systems, which are an extension of sofic systems, were first introduced by Blanchard and Hansel in \cite{blanchard1986systemes}. Subsequently, Fiebigs further examined them in \cite{fiebig1992covers}, where they specifically introduced a subclass of coded shifts known as half-synchronized systems that include synchronized systems.

The detailed understanding of half-synchronized systems is not necessary; however, it is important to note that a half-synchronized system is mixing if and only if there exists a generator $\mathcal{W}$ such that $gcd\{w: w \in \mathcal{W}\}=1$ \cite[Theorem 3.4]{dastjerdi2019mixing}.

\section{Iterated functions systems  (IFS) over an arbitrary sub-shift}\label{Sec IFS}
The \emph{conventional iterated function system}, or simply the conventional IFS, 
$(X,\,\mathcal{F})$
comprises a compact metric space
$X$
and a finite set of continuous self-maps
$\mathcal{F}=\{f_{1},\ldots ,\, f_{k}\}$
on
$X.$
We represent this system as
\begin{equation}\label{dtt}
	\mathfrak{I}= (X,\, \mathcal{F},\,\Sigma_{|\mathcal{A}|}),
\end{equation}
where
$\Sigma_{|\mathcal{A}|}$
is the full shift on
$k=|\mathcal{A}|$
symbols.
A ``generic" IFS, or simply an IFS 
refers to the scenario where our sub-shift is no longer a full shift. 
\begin{defn}\label{defn IFS}
	Let $\Sigma$ be a sub-shift on 
	$\mathcal{A}=\{1,\,2,\ldots,k\}$
	and $\mathcal{F}=\{f_{1}, \ldots , f_{k}\}$ a set of $k$ continuous self-maps on the compact metric space $X$.
	Call
	\begin{equation}\label{eq gdtt}
		\mathfrak{I}=(X,\,\mathcal{F},\,\Sigma)
	\end{equation}
	an IFS where for
	$u= u_{1} \cdots u_{n} \in \mathcal{L}(\Sigma)$,
	$f_{u}: X \longrightarrow X $ is given by
	\begin{equation}\label{eq f_u}
		f_u(x)=f_{u_{n}} \circ \cdots \circ f_{u_{1}}(x).
	\end{equation}
	The \emph{orbit of} $x\in X$ is set as
	\begin{equation}\label{eq orbitx}
		\mathcal{O}^{+}(x)=\bigcup_{u\in\mathcal{L}(\Sigma)}f_u(x).
	\end{equation}
	
Consider $\sigma = \sigma_{1} \sigma_{2} \cdots \in \Sigma$, and let the sequence $x, f_{\sigma_{1}}(x), f_{\sigma_{1} \sigma_{2}}(x), \ldots$ be the trajectory of $x$ along $\sigma$. The set of points in this trajectory, denoted as $\mathcal{O}^{+}_\sigma(x)$, is called the forward orbit of $x$ along $\sigma$. This orbit defines a non-autonomous system denoted by
	\begin{equation}\label{eq fsigma}
		f_{\sigma_1,\,\infty}.
	\end{equation}
	
	We say $\mathcal{F}=\{f_{1},\ldots ,\,f_{k}\}$ has property $P$, if all maps in $\mathcal{F}$ have that property.
	If $\mathcal{F}$ is homeomorphism, then
	${\mathcal{O}_\sigma}_-(x)$,
	${\mathcal{O}_\sigma}^{+}_{-}(x)$
	, or
	${\mathcal{O}}_{\sigma}(x)$
	can be defined similarly as in the classical dynamical systems at the beginning of Section \ref{sec preliminaries}. We adopt the notion
	$f_{\sigma_{1},\infty}$
	for all cases.
\end{defn}

Analogous to classical dynamical systems, if $\mathcal{F}=\{f_{1}, \ldots, f_{k}\}$ has property $P$, and if $\mathcal{F}$ is a homeomorphism, then ${\mathcal{O}_\sigma}_-(x)$, ${\mathcal{O}_\sigma}^{+}_{-}(x)$, or ${\mathcal{O}}_{\sigma}(x)$ can be similarly defined. The notation $f_{\sigma_{1}, \infty}$ is adopted for all cases.

Recall that a non-autonomous system is typically denoted by $f_{1,\,\infty}$ \ah{\cite{kolyada1996topological}} and is shown as \eqref{eq fsigma} to emphasize its dependence on $\sigma$. Therefore, the conventional IFS is a classical topological dynamics defined as the action of $\mathcal{L}(\Sigma_{\mid\mathcal{A}\mid})$ on $X$ via $(u,\,x)\mapsto f_u(x)$.
\begin{defn}\label{defn IDS}
	Let $\Sigma$, $\mathcal{A}$ and $\mathcal{F}$ be as in Definition \ref{defn IFS}. Here, we define an \emph{induced dynamical system (IDS)} denoted as $(\Sigma\times X,\,T)$, where
	$T:\Sigma\times X\rightarrow \Sigma\times X$ is
	given by
	\begin{equation}\label{eq T}
		T(\sigma_1\sigma_2\sigma_3\cdots, \, x)=(\tau(\sigma), \,f_{\sigma_1}(x))=(\sigma_2\sigma_3\cdots, \,f_{\sigma_1}(x)).
	\end{equation} 
\end{defn}


An IDS represents a classical dynamical system, where for any $\sigma=\sigma_1\sigma_2\cdots$, the restriction on the second coordinate unveils the dynamics of the non-autonomous system $f_{\sigma_1,\,\infty}$.

Now, we proceed to establish the concept of factoring between two IFSs, beginning with defining factors between IDSs. 
Let $\Sigma$ and $\Sigma'$ denote shift spaces over $\mathcal{A}$ and $\mathcal{A}'$ respectively, with  
$\varphi_1:\Sigma\rightarrow \Sigma'$ as the factor code outlined in Definition \ref{defn factor code}.
Then,  for 
$N\in\N$, there is $\tilde{\Sigma}\simeq \Sigma$, the $N$th higher block shift of $\Sigma$ with an alphabet
$\tilde{\mathcal{A}}=\mathcal{L}_N({\Sigma})$  \cite[\S 1.4]{lind2021introduction}. Let $\tilde{\varphi_1}:\tilde{\Sigma}\rightarrow\Sigma'$ be the related 1-block factor code.
Additionally,  $\psi_1:\tilde{\Sigma}\rightarrow\Sigma$ serves as the conjugacy map,  where
for any $\tilde{a}=u_1u_2\cdots u_N\in \mathcal{L}_N({\Sigma})$, we define
\begin{equation}\label{eq ftilde a}
	f_{\tilde{a}}=f_{u_1},
\end{equation}  $\tilde{\mathcal{F}}=\{f_{\tilde{a}}:\; \tilde{a}\in\tilde{\mathcal{A}}\}$, and $\tilde{\mathfrak{I}}=(X,\,\tilde{\mathcal{F}},\,\tilde{\Sigma})$.
Subsequently, $\psi=(\psi_1,\,id_2):\tilde{\Sigma}\times X\rightarrow \Sigma\times X$  establishes a conjugacy between $(\tilde{\Sigma}\times X,\, \tilde{T})$ and $(\Sigma\times X,\,T)$.
This sets the stage for the forthcoming definition.
\begin{defn}\label{defn factoring}
	Let	$\mathfrak{I}= (X,\mathcal{F},\,\Sigma)$ (resp. 	$\mathfrak{I}'= (X',\mathcal{F}',\,\Sigma')$) 
	where $\mathcal{F}=\{f_{1},\ldots ,\,f_{k}\}$) (resp. 
	$\mathcal{F}'=\{f'_{1},\ldots ,\,f'_{k'}\}$). Then, 
	$(\Sigma'\times X',\,T')$ is a \emph{factor} of $(\Sigma\times X,\,T)$ via a factor map $\varphi=(\varphi_1,\,\varphi_2)$ whenever $\varphi_1:\Sigma\rightarrow \Sigma'$ is a factor code and for any $a'\in\mathcal\mathcal{A}'$, there is $\tilde{a}\in\tilde{\mathcal{A}}$ such that $\varphi_2\circ f_{\tilde{a}}=f'_{a'}\circ \varphi_2$. Call $(\Sigma\times X,\,T)$ an \emph{extension} of 
	$(\Sigma'\times X',\,T')$.

	If $(\Sigma\times X,\,T)$ is also a factor of $(\Sigma'\times X',\,T')$, then these two systems are \emph{conjugate}.
	
\end{defn}
Thus factoring $(\Sigma\times X,\,T)$ over $(\Sigma'\times X',\,T')$, is defined by factoring 
$(\tilde{\Sigma}\times X,\, \tilde{T})\,(\simeq (\Sigma\times X,\,T))$  over $(\Sigma'\times X',\,T')$. Regarding \eqref{eq ftilde a},
this factoring is established whenever $\Sigma'$ is a factor of $\Sigma$ and for any $\sigma=\sigma_1\sigma_2\cdots\in\Sigma$ with $\sigma'=\sigma'_1\sigma'_2\cdots=\varphi_1(\sigma)$, 
\begin{equation}\label{eq commutativity}
	\varphi_2\circ f_{\sigma_i}=f'_{\sigma'_i}\circ\varphi_2.	
\end{equation}
Let $\sigma\in\Sigma$ and $\Sigma_\sigma=\overline{\mathcal{O}(\sigma)}$. Then, $\Sigma_\sigma$ is a sub-shift over
$\mathcal{A}_\sigma\subset \mathcal{A}$, the set of characters appearing along $\sigma$, and define $\mathcal{F}_\sigma$, $T_\sigma$, and 
$(\Sigma_\sigma\times X,\,T_\sigma)$ accordingly.
\begin{defn}
	Let $f_{\sigma_1,\,\infty}$ and $f'_{\sigma'_1,\,\infty}$ be non-autonomous systems defined on $X$ and $X'$ respectively. Then, $f'_{\sigma'_1,\,\infty}$ is a factor of $f_{\sigma_1,\,\infty}$, if  $(\Sigma_{\sigma'}'\times X',\,T'_{\sigma'})$ is a factor of $(\Sigma_\sigma\times X,\,T_\sigma)$ via a factor map $\varphi=(\varphi_1,\,\varphi_2)$ with $\varphi_{1}(\sigma)=\sigma'$.
	
	Let  $\mathfrak{I}$ and $\mathfrak{I}'$ be  as in Definition \ref{defn factoring}. Then, $\mathfrak{I}'$ is a factor of $\mathfrak{I}$ via $\varphi=(\varphi_1,\,\varphi_2)$, if $f'_{\sigma'_1,\,\infty}$ is a factor of $f_{\sigma_1,\,\infty}$ via $\varphi$ for any $\sigma\in\varphi_{1}^{-1}(\sigma')$.
	
	Two non-autonomous systems or two IFS's are conjugate, if any of them is a factor of the other. Extensions are defined as in Definition \ref{defn factoring}.
\end{defn}
By the same reasoning stated before the above definition, 
$f'_{\sigma'_1,\,\infty}$ is a factor of $f_{\sigma_1,\,\infty}$ whenever $\Sigma'_{\sigma'}$ is a factor of $\Sigma_{\sigma}$ and \eqref{eq commutativity} is satisfied.

Factoring non-autonomous systems can be defined when $\sigma$ includes countably many characters with some care. Remember for shifts over a finite alphabet, a sliding block code's existence ensures its image as a shift space. This does not hold for shifts with infinite characters \cite{sobottka2022some}.
\subsection{Mixing properties for an IFS}
For \IFS\!, the definition of the  
\emph{return time} is given by
$$N(U,\,V)=\{n\in\mathbb{N}: U\cap f_u^{-1}V\neq\emptyset,\, u\in\mathcal{L}_n(\Sigma)\}$$ 
  where $U$ and $V$ are
nonempty and open sets in $X$.

\begin{defn}\label{def IFS properties}
	\cite{ahmadi2023iterated}
	Let
	$\mathfrak{I}= (X,\,\{f_{1},\ldots ,\,f_{k}\},\,\Sigma)$ 
	and let $U$ and $V$ be two nonempty open sets in $X$.
	Then
	$\mathfrak{I}$
	is
	\begin{enumerate}
		\item
		\emph{point transitive}, if there is
		$x \in X$, called the transitive point,
		such that
		$\overline{\{f_{u}(x): u \in \mathcal{L}(\Sigma)\}}=X$.
		\item \label{top tra}
		\emph{topological transitive}, if
		$N(U,\,V)\neq\emptyset$.
		\item 
		\emph{totally transitive}, if for any $N\in \N$, 
		$(X,\,\{f_u:\;u\in\mathcal{L}_N(\Sigma)\},\,\Sigma^N)$ is transitive where
		$\Sigma^N$ is the $N$th higher power shift of $\Sigma$ \cite[\S 1.4]{lind2021introduction}.
		\item
		\emph{weak mixing}, 
		if $N(U,\,V)$ is a thick set, that is, 
		$N(U,\,V)$ contains 
		arbitrarily long intervals of $\N$.
		\item
	\emph{mixing}, if $N(U,\,V)$ is cofinite. 
	\end{enumerate}	
\end{defn}

All notions in the above definition are preserved under factoring. Also,
\begin{center}
	mixing $\Rightarrow$ weak mixing $\Rightarrow$ totally transitive  $\Rightarrow$ point transitive.
\end{center}

\begin{rem}\label{rem mixing}
	If we set $X := \Sigma$ and $\mathcal{F} := \{\tau\}$, the classic definition of transitivity, mixing, etc., for a sub-shift emerges. For instance, $(\Sigma, \tau)$ is transitive if, for any $u, v \in \mathcal{L}(\Sigma)$, there exists $w$ such that $uwv \in \mathcal{L}(\Sigma)$; or, it is mixing if there exists $M = M(U, V)$ such that for $n>M$, there is $w \in \mathcal{L}_{n}(\Sigma)$ with $uwv \in \mathcal{L}(\Sigma)$.
\end{rem}

\subsubsection{Periodic points and chaos}
There are various types of periodic points in an IFS (See \cite[Definition 3.1]{dastjerdi2023shift}).
Yet, we adhere to the subsequent definition.
\begin{defn}
	Let \IFS be an IFS.
	A point $x\in X$ is periodic of period $p$ if there exists $\sigma\in\Sigma$ such that $T^p(\sigma, \, x)=(\sigma, \, x)$ where $T$ is the IDS map defined in \ref{eq T}.
\end{defn}

Thus, a point $x\in X$ is periodic of period $p$ if there exists $u=u_1\cdots u_p\in\mathcal{L}(\Sigma)$ with $\sigma=u^\infty\in\Sigma$  and $f_u(x)=x$. Additionally, periodic points map to periodic points under the factor map.

\subsection{ The attractor set of an IFS}
Hereinafter, we assume that the maps in $\mathcal{F}$ are contractions. This means that for $f_i\in\mathcal{F}$, $f_i:(X,\,d) \rightarrow (X,\,d)$, there exists a constant $0\leq r_i < 1$ such that $d(f_i(x),\,f_i(y)) \leq r_i d(x,\,y)$ for all $x,\,y \in X$. Let $\{r_{1},\ldots ,\,r_{k}\}$ be the \emph{ratio list} associated with $\mathfrak{I}= (X,\mathcal{F},\,\Sigma)$ and $\mathcal{C}$ be the set of compact subsets in $X$. Subsequently, $(\mathcal{C}, \,d_{H})$ will be a compact space where $d_{H}$ denotes the Hausdorff metric. We use $B_H(\cdot,\,\epsilon)$ to represent a ball with radius $\epsilon$ in the Hausdorff metric.

In \cite{hutchinson1981fractals},
Hutchinson observed that when
$\Sigma$
represents a full shift, (referred here as the conventional IFS),
$H: \mathcal{C} \rightarrow \mathcal{C}$
is defined by
\begin{equation}\label{eq Hutchinson map}
	H(A)= \bigcup_{i=1}^{|\mathcal{A}|} f_i(A)
\end{equation}
is a contraction. Thus, according to the Contraction Mapping Theorem, there exists a unique
$S \in \mathcal{C}$, known as the attractor,
such that
for any $A\in\mathcal{C}$,
\begin{equation}\label{eq limitgeneral}
	\lim_{n \rightarrow \infty} H^n(A)=S.
\end{equation}
and
\begin{equation}\label{eq H(S)=S}
	H(S)=S.
\end{equation}
The map
$H$
is known as the \emph{Hutchinson operator}, and $H^n(A)$ represents the $n$th iteration of $H$. Hence, if $n\in\N$, then from \eqref{eq H(S)=S}, we obtain $H^n(S)=S$ ($H=H^1$).
We extend the definition of $H^n$ to cases where
$\Sigma$
is no longer a full shift; in that scenario, $H(=H_\Sigma)$, as defined in \eqref{eq Hutchinson map}, is not a map, but rather
\begin{equation}\label{eq F^n general}
	H^n(A)=\bigcup_{u\in\mathcal{L}_n(\Sigma)}f_u(A)\in\mathcal{C}
\end{equation}
with
\begin{equation}\label{eq induction}
	H^{n+1}(A)=\bigcup_{ui\in\mathcal{L}_{n+1}(\Sigma),\,i\in\mathcal{A}}f_{ui}(A)
\end{equation}
is well understood and is non-empty. This is due to the fact that any word in $\mathcal{L}(\Sigma)$ has a right prolongation.
We refer to $H=H^1$ as the "generalized" \emph{Hutchinson operator} (for simplicity, we refer to it as the Hutchinson operator, dropping the subscript $\Sigma$ for $H^n$ and $H^{n+1}$ as mentioned above).
Note that if we define a forbidden word as $f_u(A)=\emptyset$, then we can express $$H^{n+1}(A)=\bigcup_{i=1}^{\mathcal|\mathcal{A}|}\bigcup_{u\in\mathcal{L}_{n}(\Sigma)}f_{ui}(A)=\bigcup_{i=1}^{\mathcal|\mathcal{A}|}f_i(H^n(A)),$$ which is similar to the conventional case definition. We will see that this definition of $H^n(A)$ ensures the limit in \eqref{eq limitgeneral} but not the equality in \eqref{eq H(S)=S}.

\begin{example}\label{ex 12infty}
	(1). Consider $\mathfrak{I}=(X,\,\{f_1,\,f_2\},\,\Sigma)$ where
	$X=[0,\,1]$,
	$\Sigma=\{(12)^\infty,\,(21)^\infty\}$  and $f_1(x)=\frac13 x$, $f_2(x)=\frac13 x +\frac23$.
	Then, $\mathcal{L}_{2n}(\Sigma)=\{(12)^n\}\cup \{(21)^n\}$ and in view of equation $\eqref{eq induction}$, $\mathcal{L}_{2n+1}(\Sigma)=\{(12)^n1\}\cup \{(21)^n2\}$.
	Note that $f_1$ and $f_2$ along with $f_{21}=f_1\circ f_2$ and  $f_{12}=f_2\circ f_1$ are contraction mappings. Hence, by the Contraction Mapping Theorem,
	for $A\in\mathcal{C}$, $\lim_nf_{(21)^n}(A)=\frac14$ and $\lim_nf_{(12)^n}(A)=\frac34$.
	Furthermore, $\lim_n f_{(21)^n2}(A)=f_2(\lim_n f_{(21)^n}(A))=\frac34$ and
	$\lim_n f_{(12)^n1}(A)=f_1(\lim_n f_{(12)^n}(A))=\frac14$.
	Therefore, $S=\lim_n H^n(X)=\{\frac34,\,\frac14\}$;
	however, unlike the situation where $\Sigma$ was a full shift, equation $\eqref{eq H(S)=S}$ does not hold. Here, we have
	$ H(S)=\{\frac14,\,\frac34,\,\frac{1}{12},\,\frac{11}{12}\}$.
	\vskip.3cm
	
	\noindent(2). Note that if $\Sigma$ in the above IFS is the full shift $\{1,\,2\}^\N$, then according to Hutchinson's findings in \cite{hutchinson1981fractals}, both equations $\eqref{eq limitgeneral}$ and $\eqref{eq H(S)=S}$ are met, and it is not hard to see that $S$ corresponds to the standard Cantor set.
	\vskip.3cm
	
	\noindent(3).
	In situations where all maps in
	$\mathcal{F}$ of an IFS
	possess the same fixed point, denoted as
	$x_{0}$,
	then for
	$A \in \mathcal{C}$,
	$S=\lim_{n \rightarrow \infty}H^{n}(A)=\{x_{0}\}$ and equation $\eqref{eq H(S)=S}$ holds for
	any sub-shift
	$\Sigma$.
	An instance is when the maps in
	$\mathcal{F}$
	are commutative.
\end{example}

The following demonstrates that \eqref{eq limitgeneral} holds true.
\begin{prop} \label{prop AX}
	Consider \IFS where $\mathcal{F}$ is a contraction.
	Suppose $A$ and $B$ are compact sets in $X$. Then,  
	$\lim_{n\rightarrow\infty} H^{n}(A)$ exists and
	\begin{equation} \label{eq H^n}
		\lim_{n\rightarrow\infty} H^{n}(A)= \lim_{n\rightarrow\infty} H^{n}(B).
	\end{equation}
	Let $S=\lim_{n\rightarrow\infty}H^n(X)$. Thus, $S$ is unique, and we will have $S\subseteq H^n(S)$ for every $n\in\N$.
\end{prop}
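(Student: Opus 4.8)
The plan is to obtain the limit from a monotonicity property of the seed $X$, transfer it to an arbitrary compact set by contractivity, and then establish $S\subseteq H^n(S)$ by a limiting argument resting on one key set inclusion. Throughout I set $r:=\max_{1\le i\le k} r_i<1$, so that any $f_u$ with $u\in\mathcal{L}_n(\Sigma)$ has ratio at most $r^n$ and $\mathrm{diam}\,f_u(X)\le r^n\,\mathrm{diam}\,X$. I would first show that $\bigl(H^n(X)\bigr)_n$ is nested, i.e. $H^{n+1}(X)\subseteq H^n(X)$. Take $w=u_1\cdots u_{n+1}\in\mathcal{L}_{n+1}(\Sigma)$ and let $w'=u_2\cdots u_{n+1}$; since $w'$ is a subword of an admissible word it lies in $\mathcal{L}_n(\Sigma)$, and from \eqref{eq f_u} one gets $f_w(X)=f_{w'}\bigl(f_{u_1}(X)\bigr)\subseteq f_{w'}(X)\subseteq H^n(X)$, using that each $f_i$ is a self-map so $f_{u_1}(X)\subseteq X$. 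Taking the union over $w$ gives $H^{n+1}(X)\subseteq H^n(X)$. Hence $\bigl(H^n(X)\bigr)_n$ is a decreasing sequence of nonempty compact sets, $S:=\bigcap_n H^n(X)$ is nonempty and compact, and a nested sequence of compact sets converges in $d_H$ to its intersection; therefore $\lim_n H^n(X)=S$.

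Next, for an arbitrary nonempty compact $A\subseteq X$ I would compare $H^n(A)$ with $H^n(X)$. The operator $H^n$ is a $d_H$-contraction of ratio at most $r^n$, since $d_H\bigl(\bigcup_u f_u(A),\bigcup_u f_u(B)\bigr)\le\max_u d_H\bigl(f_u(A),f_u(B)\bigr)\le r^n d_H(A,B)$; taking $B=X$ gives $d_H\bigl(H^n(A),H^n(X)\bigr)\le r^n\,\mathrm{diam}\,X\to0$. Combined with $H^n(X)\to S$ this yields $H^n(A)\to S$ for every nonempty compact $A$, which is precisely \eqref{eq H^n} and the asserted independence of the limit from the seed; in particular $S$ is unique.

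It remains to prove $S\subseteq H^n(S)$ for each fixed $n$. The crucial observation is the inclusion $H^{m+n}(X)\subseteq H^n\bigl(H^m(X)\bigr)$ for every $m$: expanding the right-hand side and using \eqref{eq f_u} gives $H^n\bigl(H^m(X)\bigr)=\bigcup_{v\in\mathcal{L}_m(\Sigma),\,u\in\mathcal{L}_n(\Sigma)} f_{vu}(X)$, a union over all pairs of individually admissible words, whereas every admissible $w\in\mathcal{L}_{m+n}(\Sigma)$ splits as $w=vu$ with prefix $v\in\mathcal{L}_m(\Sigma)$ and suffix $u\in\mathcal{L}_n(\Sigma)$, so each term $f_w(X)$ of $H^{m+n}(X)$ occurs on the right. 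Since $S\subseteq H^{m+n}(X)$, we get $S\subseteq H^n\bigl(H^m(X)\bigr)$ for all $m$. Letting $m\to\infty$, continuity of $H^n$ (Hausdorff-Lipschitz, by the bound above) gives $H^n\bigl(H^m(X)\bigr)\to H^n(S)$; and because $S$ lies in each \emph{closed} set $H^n\bigl(H^m(X)\bigr)$, every $p\in S$ satisfies $d\bigl(p,H^n(S)\bigr)\le d_H\bigl(H^n(H^m(X)),H^n(S)\bigr)\to0$, so $p\in H^n(S)$. Thus $S\subseteq H^n(S)$.

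The main obstacle, and the reason the statement gives only $S\subseteq H^n(S)$ rather than equality, is exactly that the inclusion $H^{m+n}(X)\subseteq H^n\bigl(H^m(X)\bigr)$ is in general strict: the right-hand side carries the extra pieces $f_{vu}(X)$ coming from concatenations $vu$ that are \emph{not} admissible in $\Sigma$, and these may persist in the limit to create points of $H^n(S)\setminus S$, as Example \ref{ex 12infty} illustrates. The one delicate point in the limiting step is to exploit the closedness of the sets $H^n\bigl(H^m(X)\bigr)$ together with the Hausdorff convergence, rather than attempting a naive pointwise passage to the limit.
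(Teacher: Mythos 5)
Your proof is correct, and its core matches the paper's: both obtain $H^{n+1}(X)\subseteq H^n(X)$ by deleting the first symbol (so that $f_{au}(X)=f_u(f_a(X))\subseteq f_u(X)$), and both identify $S=\bigcap_n H^n(X)$ as the Hausdorff limit of a nested sequence of nonempty compacta. Where you genuinely diverge is in the remaining two steps, in both cases to your advantage. For seed-independence the paper argues with shrinking diameters: $f_u(A)\subseteq f_u(X)$ and $\mathrm{diam}\,f_u(X)\le\big(\prod_{i}r_{u_i}\big)\,\mathrm{diam}\,X\to 0$, so an $\epsilon$-neighborhood of $S$ that captures the pieces $f_u(X)$ captures the pieces $f_u(A)$ as well; you instead observe that $H^n$, carrying the same index set $\mathcal{L}_n(\Sigma)$ on both sides, is a $d_H$-contraction of ratio $r^n$, whence $d_H(H^n(A),H^n(X))\le r^n\,\mathrm{diam}\,X\to 0$ --- the same idea, but the operator formulation is tighter and yields uniqueness at once. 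More importantly, for $S\subseteq H^n(S)$ the paper gives no argument at all (it is stated as though immediate from $S=\bigcap_n H^n(X)$, which it is not), whereas you actually prove it: the splitting inclusion $H^{m+n}(X)\subseteq H^n(H^m(X))$ --- strict in general precisely because the right-hand side picks up non-admissible concatenations, which is also the correct explanation of why equality $S=H^n(S)$ can fail --- followed by letting $m\to\infty$ using $d_H$-continuity of $H^n$ and closedness of $H^n(S)$. An equivalent finish avoiding the limit is pigeonhole: for $p\in S$ and each $m$ choose $u^{(m)}\in\mathcal{L}_n(\Sigma)$ with $p\in f_{u^{(m)}}(H^m(X))$; finiteness of $\mathcal{L}_n(\Sigma)$ gives one $u$ recurring along a subsequence, and then $p\in\bigcap_j f_u(H^{m_j}(X))=f_u(S)\subseteq H^n(S)$ by compactness and nestedness. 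Either way, your write-up follows the paper where the paper is explicit and closes the one real gap in its proof.
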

\begin{proof}
	First, we establish the existence of $\lim_{n\rightarrow\infty} H^{n}(X)$.
	For any $v\in \mathcal{L}_{n+1}(\Sigma)$, there exists $a\in\mathcal{A}$ and $u\in\mathcal{L}_n(\Sigma)$ such that $v=au$.
	This implies $f_v(X)=f_u(f_a(X))\subseteq f_u(X)$ and thus $H^{n+1}(X)\subseteq H^n(X)$. Consequently, $$S=\lim_{n\rightarrow\infty}H^{n}(X)=\bigcap_{n\in\N}H^n(X),$$ which is compact as each $H^n(X)$ is compact.
	Furthermore, $S\subseteq H^n(S)$ and as a limit, it must be unique.
	
	To show the equality in \eqref{eq H^n}, it suffices to demonstrate that if $B=X$ and $A$ is an arbitrary compact set, then \eqref{eq H^n} holds.
	Let $S=\lim_{n\rightarrow\infty}H^n(X)$ and consider the sequence $\{u^{(p)}\}_{p\in\N}$ where $u^{(p)}=u_1^{(p)}\cdots u_p^{(p)}\in\mathcal{L}_p(\Sigma)$ with increasing length $p$.
	We have diam$f_{u^{(p)}}(X)<\left(\Pi_{i=1}^p r_{u_i^{(p)}}\right)\text{diam}X$ where $r_{u_i^{(p)}}$ is the contraction ratio of $f_{u_i^{(p)}}$.
	Therefore, for $A\subseteq X$, $\lim_{p\rightarrow\infty}\text{diam}(f_{u^{(p)}}(A))\leq \lim_{p\rightarrow\infty}\text{diam}(f_{u^{(p)}}(X))=0.$
		In particular, if $B(s;\,\epsilon)\subseteq X$ is a neighborhood of a point $s\in S$ containing $f_u(X)$, then it will also contain $f_u(A)\subseteq f_u(X)$. In essence, if any $\epsilon$-neighborhood of $S$ contains $f_u(X)$, it will encompass $f_u(A)$ too, leading to the desired outcome.
\end{proof}

\begin{defn}
	The set $S = \lim_n H^n(X)$ in Proposition \ref{prop AX} is referred to as the \emph{attractor} of the IFS.
\end{defn}

\subsubsection{Abundance of self-similarities in an attractor}
A compact topological space $Y$ is considered \emph{self-similar} if there exists a finite set of non-surjective homeomorphisms $\{h_j: Y \overset{h_j}{\rightarrow} Y; 1 \leq j \leq l\}$ such that $Y = \bigcup_{j=1}^{l} h_j(Y)$. Therefore, when $Y$ is self-similar, parts resemble the whole up to homeomorphism. This leads to the following definition.
\begin{defn} \label{def selfsim}
	Let \IFS be an IFS and let $S$ be its attractor. If for $u \in \mathcal{L}_n(\Sigma)$, $f_u$ is a non-surjective homeomorphism on $S$ and $S = H^n(S) = \bigcup_{u \in \mathcal{L}_n(\Sigma)} f_u(S)$, then we say that $S$ has a self-similarity of order $n$.
\end{defn}

In part 3 of the following example, an IFS is presented such that its associated maps are contractive homeomorphisms on $X$, but there exist some $n \in \N$ where none of $f_u$'s are contractive homeomorphisms on $S$ for $u \in \mathcal{L}_n(\Sigma)$.

\begin{example}
	(1). The Koch curve shown in Figure \ref{fig Koch} serves as the attractor for $$\mathfrak{I}_g = (X, \{g_1, g_2, g_3, g_4\}, \Sigma_4)$$ where $X = [0, 1] \times [0, 1]$.
	The transformations are defined as follows:
	$g_1(x,y) = \frac{1}{3}(x,y)$, $g_2(x,y) = \frac{1}{6}(x - \sqrt{3}y + 2, \sqrt{3}x + y)$,
	$g_3(x,y) = \frac{1}{6}(x + \sqrt{3}y + 3, -\sqrt{3}x + y + \sqrt{3})$, and
	$g_4(x,y) = \frac{1}{3}(x + 2, y)$.
	When $A = [0, 1] \times \{0\} \subset X$, Figure \ref{fig Koch} illustrates $A$, $H(A)$, and $H^2(A)$ from top to bottom.
	For all $n \in \N$, $H^n(S) = S$, as our sub-shift $\Sigma_4$ is a full shift, indicating that $\mathfrak{I}_g$ possesses self-similarities of all orders.
	\begin{figure}
		\centering
		\includegraphics[width=.4\linewidth]{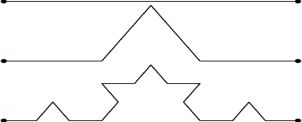}
		\caption{$A$, $H(A)$, and $H^2(A)$ for the Koch curve}\label{fig Koch}
	\end{figure}
	\vskip.3cm
	(2). Later, we will demonstrate the existence of infinitely many compact subsets $S' \subset S$ such that $H(S') = S'$. Let us regard some trivial scenarios. Restriction of $g_1$ and $g_4$ to $[0, 1]$ is $f_1$ and $f_2$ respectively in Example \ref{ex 12infty}. This illustrates that the standard Cantor set $S' \subset S$ through $g_1$ and $g_4$ unveils self-similarities of all orders within $S$. Removing certain functions from $\mathcal{F}$ as done here, reveals only finitely many such self-similarities within $S$. Our primary objective will be to identify self-similarities for compact subsets of $S$ without excluding any map from $\mathcal{F}$, but by exploring the appropriate subsystems of our full shift that fulfill this requirement.
	
	(3). The IFS in Example \ref{ex 12infty}(1) lacks self-similarities of any order for its attractor $S = \{\frac{3}{4}, \frac{1}{4}\}$. There, we demonstrated that it lacks self-similarity of order 1, and the same reasoning shows its lack of order $2n + 1$ as well. For that example, $H^{2n}(S) = f_{(12)^n}(S) \cup f_{(21)^n}(S)$, yet $S = f_{(12)^n}(S) = f_{(21)^n}(S)$, thereby indicating that none of $f_u$, $u \in \mathcal{L}_{2n}(\Sigma)$, is a non-surjective homeomorphism.
	Consequently, by definition, our IFS does not possess self-similarity of order 2n, and by extension, not of any order. This outcome is expected, as a finite set cannot be homeomorphic to its proper subsets.
\end{example}


\begin{rem}\label{rem equivalent for S}
	Let \IFS be an IFS.
	Recall that by Definition \ref{def selfsim}, the attractor $S$ has a self-similarity of order $n$, if $S=H^{n}(S)= \bigcup_{u \in\mathcal{L}_{n}(\Sigma)}f_{u}(S)$ and $f_u$ a contractive homeomorphism on $S$. This implies that for all $k \in \N$, $H^{kn}(S)=S$. For assume $S=H^n(S)$ and for $v\in\mathcal{L}_{2n}(\Sigma)$ write $v=u'u$; $ u',\, u \in \mathcal{L}_{n}(\Sigma) $. Then,
	\begin{align*}
		H^{2n}(S)&= \bigcup_{v\in \mathcal{L}_{2n}(\Sigma)}f_v(S)= \bigcup_{u',\, u \in \mathcal{L}_{n}(\Sigma)}f_{u}(f_{u'}(S))\\
		& = \bigcup_{ u \in\mathcal{L}_{n}(\Sigma)}f_u(\bigcup_{ u' \in \mathcal{L}_{n}(\Sigma)}f_{u'}(S))= \bigcup_{ u \in\mathcal{L}_{n}(\Sigma)}f_u(S)=S.
	\end{align*}
	Now $ H^{kn}(S)=S $ and self-similarity of order $kn$ follows from an induction argument and the fact that $f_v$ is a contractive homeomorphism on $S$ for $v\in\mathcal{L}_{kn}(\Sigma)$.
\end{rem}

Next result shows how self-similarity is preserved by factoring.

\begin{prop}\label{prop H(S) is preserved}
	Let $\mathfrak{I}=(X,\,\mathcal{F},\,\Sigma)$ be an IFS with $\mathcal{F}$ contraction and factoring on $\mathfrak{I}'=(X',\,\mathcal{F}',\,\Sigma')$ via $\varphi=(\varphi_1,\,\varphi_2):\Sigma\times X\rightarrow \Sigma'\times X'$ where $\varphi_1$ is a factor code with memory $m$ and anticipation $n$. Then, $\varphi_2(S)=S'$ where $S$ and $S'$ are the attractors of $\mathfrak{I}$ and $\mathfrak{I}'$ respectively. Moreover, if $H^{\ell+m+n}(S) = S$, then ${H'}^\ell(S')=S'$. Here, $H'$ is the Hutchinson operator for $\mathfrak{I}'$.
\end{prop}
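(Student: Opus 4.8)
The plan is to prove the two assertions in turn, obtaining $\varphi_2(S)=S'$ first and then using it as the backbone for the self-similarity transfer.

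For $\varphi_2(S)=S'$ I would set up a Hausdorff-metric squeeze. Using the commutativity \eqref{eq commutativity} together with the surjectivity of the factor code $\varphi_1$, I would check the two set inclusions
\[
{H'}^{k}(\varphi_2(X))\ \subseteq\ \varphi_2\bigl(H^{k}(X)\bigr)\ \subseteq\ {H'}^{k}(X')\qquad(k\in\N).
\]
The right inclusion comes from writing, for $u\in\mathcal{L}_k(\Sigma)$ embedded in a point $\sigma\in\Sigma$, $\varphi_2\circ f_u=f'_{w}\circ\varphi_2$ with $w$ the corresponding coordinates of $\varphi_1(\sigma)$, so that $\varphi_2(f_u(X))=f'_w(\varphi_2(X))\subseteq f'_w(X')$; the left inclusion is the same computation read backwards, every $w\in\mathcal{L}_k(\Sigma')$ being realized from some $u$ by surjectivity. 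Taking $d_{H}$-limits and invoking Proposition \ref{prop AX} applied to both $\mathfrak{I}$ and $\mathfrak{I}'$, the outer two sequences converge to $S'$ while the middle one converges to $\varphi_2(S)$ by continuity of $\varphi_2$ on the compact space $X$; a one-line Hausdorff squeeze (any sequence trapped between two sequences with a common $d_{H}$-limit has that limit) together with uniqueness of the limit then forces $\varphi_2(S)=S'$.

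For the second assertion I would use the always-valid inclusion $S'\subseteq {H'}^{\ell}(S')$ from Proposition \ref{prop AX} and aim at the reverse inclusion under the hypothesis. The key step is the containment
\[
{H'}^{\ell}(S')\ \subseteq\ \varphi_2\bigl(H^{\ell+m+n}(S)\bigr).
\]
Granting it, the hypothesis $H^{\ell+m+n}(S)=S$ and the already-proved $\varphi_2(S)=S'$ give ${H'}^{\ell}(S')\subseteq\varphi_2(S)=S'$, and combined with $S'\subseteq {H'}^{\ell}(S')$ this yields ${H'}^{\ell}(S')=S'$. To prove the containment I would pass to the higher block presentation recalled before Definition \ref{defn factoring}: replace $\varphi_1$ by the conjugate $1$-block code $\tilde{\varphi_1}:\tilde{\Sigma}\to\Sigma'$ on the $N$th higher block shift with $N=m+n+1$, where \eqref{eq ftilde a} makes the commutativity context free. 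For $w\in\mathcal{L}_\ell(\Sigma')$ I would choose, by surjectivity of the induced word map, a word of length $\ell$ over $\tilde{\mathcal{A}}$, equivalently a word $u\in\mathcal{L}_{\ell+m+n}(\Sigma)$ whose image is $w$, and push $\varphi_2$ through $f_u$ to obtain $f'_w\circ\varphi_2=\varphi_2\circ f_{u}$ on $S$; since $u\in\mathcal{L}_{\ell+m+n}(\Sigma)$ we have $f_u(S)\subseteq H^{\ell+m+n}(S)$, so $f'_w(S')\subseteq\varphi_2(H^{\ell+m+n}(S))$, giving the containment after taking the union over $w$.

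The main obstacle is precisely the bookkeeping in this last step. A length-$\ell$ admissible word of $\Sigma'$ is produced by the sliding block code only from a window of length $\ell+m+n$ in $\Sigma$, and the $m$ memory symbols and $n$ anticipation symbols cannot be seen by $\varphi_2$ on their own; this is exactly why the exponent inflates to $\ell+m+n$. The higher block recoding is what converts this window overhead into an honest length correspondence $\mathcal{L}_{\ell+m+n}(\Sigma)\to\mathcal{L}_\ell(\Sigma')$ and, via \eqref{eq ftilde a}, keeps the relation $\varphi_2\circ f_{\tilde a}=f'_{\tilde{\varphi_1}(\tilde a)}\circ\varphi_2$ free of context. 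Right prolongation (which always holds, by the definition of $\mathcal{L}(\Sigma)$) guarantees that the anticipation symbols needed to close each window actually exist in $\Sigma$, while the memory symbols are absorbed into the chosen block word; once this correspondence is arranged cleanly, the remaining verifications reduce to the routine composition of \eqref{eq commutativity} and the continuity and uniqueness facts packaged in Proposition \ref{prop AX}.
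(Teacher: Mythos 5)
Your overall strategy is the same as the paper's: both arguments run on pushing $\varphi_2$ through word maps via the commutativity relation, on surjectivity of the induced word map, and on continuity of $\varphi_2$ at the level of compact sets together with Proposition \ref{prop AX}. Your first part is correct and in fact slightly tidier than the paper's equality chain: the two-sided squeeze avoids the paper's implicit use of $\varphi_2(X)=X'$, and it keeps lengths consistent, since composing \eqref{eq commutativity} letter by letter along a point shows that pushing $\varphi_2$ through $f_u$, $u\in\mathcal{L}_k(\Sigma)$, produces a primed word of the \emph{same} length $k$. Your second part also has the same skeleton as the paper's (the standing inclusion $S'\subseteq {H'}^{\ell}(S')$ plus a reverse containment, versus the paper's substitution of $S$ for $X$ in its displayed computation).

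The gap is in the key identity of your second part: $f'_w\circ\varphi_2=\varphi_2\circ f_u$ for $u\in\mathcal{L}_{\ell+m+n}(\Sigma)$ mapping to $w\in\mathcal{L}_\ell(\Sigma')$. This is not what the higher-block recoding delivers. By \eqref{eq ftilde a} each block symbol $\tilde a=u_1\cdots u_N$ acts through its first letter only, $f_{\tilde a}=f_{u_1}$, and consecutive blocks of $\tilde\Sigma$ overlap in $N-1$ letters; hence composing the $1$-block relations along a block word of length $\ell$ (equivalently, composing \eqref{eq commutativity}, which is length-preserving) yields
\[
\varphi_2\circ f_{u_1\cdots u_\ell}=f'_{w}\circ\varphi_2,
\]
in which only the first $\ell$ letters of $u$ occur --- not $\varphi_2\circ f_u$. (Indeed the mismatched identity cannot hold in general: combined with the correct one it would force $\varphi_2\circ f_{u_{\ell+1}\cdots u_{\ell+m+n}}$ to act as $\varphi_2$ on $f_{u_1\cdots u_\ell}(S)$, which fails for contractions.) Consequently what your argument actually establishes is ${H'}^{\ell}(S')\subseteq\varphi_2\left(H^{\ell}(S)\right)$, which consumes the hypothesis $H^{\ell}(S)=S$ rather than the stated $H^{\ell+m+n}(S)=S$; nor can one enlarge $H^{\ell}(S)$ to $H^{\ell+m+n}(S)$, because writing $s=f_v(s'')$ with $s''\in S$, $v\in\mathcal{L}_{m+n}(\Sigma)$ does not make the concatenation $vu$ admissible. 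You should be aware, however, that the paper's own proof commits the identical length mismatch when it writes $\varphi_2(H^{\ell+m+n}(X))=\bigcup_{u}f'_{\varPhi_1(u)}(\varphi_2(X))={H'}^{\ell}(X')$, so your attempt faithfully reproduces the published argument including its flaw; what both arguments genuinely support is the exponent-matching transfer, namely that $H^{k}(S)=S$ implies ${H'}^{k}(S')=S'$ for every $k$.
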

\begin{proof}
	First assume that $\varphi_1$ is a 1-block factor map with block map $\varPhi_1:\mathcal{A}\rightarrow \mathcal{A}'$. Then for $u\in\mathcal{L}_\ell(\Sigma)$, there is $u':= \varPhi_1(u) \in\mathcal{L}'_\ell(\Sigma')$ with $\varphi_2\circ f _{u} = f'_{\varPhi_1(u)}\circ \varphi_2$ and
	\begin{align}
		\varphi_2({H}^{\ell}(X))& = \varphi_2 (\bigcup_{u \in \mathcal{L}_{\ell}(\Sigma)}f_{u}(X)) = \bigcup_{u \in \mathcal{L}_{\ell}(\Sigma)} \varphi_2 f_{u}(X)\nonumber\\
		&= \bigcup_{u \in \mathcal{L}_{\ell}(\Sigma)}f'_{\varPhi_1(u)}(\varphi_2 (X)) = \bigcup_{u' \in \mathcal{L}_{\ell}(\Sigma')} f'_{u'}(X')\label{eq wasati}\\
		& = {H'}^{\ell}(X').\nonumber
	\end{align}
	But $\varphi_2:X\rightarrow X'$ is continuous and so continuous as $\varphi_2:\mathcal{C}\rightarrow\mathcal{C}'$. Thus $\varphi_2(S)=\varphi_2(\lim_\ell H^\ell(X))=\lim_\ell \varphi_2(H^\ell(X))=\lim_\ell {H'}^\ell(X')=S'$. Moreover, assume that $S=H^\ell(S)$. Then by replacing $X$ with $S$ in \eqref{eq wasati} and applying the same routine, we will have $H'^\ell(S')=\varphi_2(H^\ell(S))=\varphi_2(S)=S'$.
	
	If $\varphi_1$ is not a 1-block factor code map, then by Definition \ref{defn factor code}, there are some $(m,\,n)\neq (0,\,0)$ with $\varPhi : \mathcal{L}_{m+n+1}(\Sigma)\rightarrow \mathcal{A}'$. In that case, we write \eqref{eq wasati} as
	\begin{align*}
		\varphi_2({H}^{\ell+m+n}(X))
		&= \bigcup_{u \in \mathcal{L}_{\ell+m+n}(\Sigma)}f'_{\varPhi_1(u)}(\varphi_2 (X)) = \bigcup_{u' \in \mathcal{L}_{\ell}(\Sigma')} f'_{u'}(X')\\
		& = {H'}^{\ell}(X').\nonumber
	\end{align*}
	Now the same proof as above implies that $S'=\varphi_2(S)$ and if $S={H}^{\ell+m+n}(S)$, then $S'={H'}^\ell(S')$.
\end{proof}

When the maps in $\mathcal{F}$ and $\mathcal{F}'$ are all contractions and homeomorphisms, Proposition \ref{prop H(S) is preserved} implies that if $\mathfrak{I}$ has a self-similarity of order $\ell+m+n$, then $\mathfrak{I}'$ has self-similarity of order $\ell$.
In cases where $\mathfrak{I}$ exhibits self-similarity of all orders, the same holds true for $\mathfrak{I}'$.
Typically, we consider the case where $\varphi$ is a 1-block factor code.
With this assumption and Proposition \ref{prop H(S) is preserved}, self-similarity of any order is conserved through factoring.

\begin{cor} \label{cor SFT S}
	Let
	\IFS
	and
	$\Sigma$
	be an irreducible SFT with at least one fixed point.
	Then for all $n\in\N$, $S=H^n(S)$.
	\end{cor}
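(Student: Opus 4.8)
The plan is to lean on Proposition~\ref{prop AX}, which already supplies $S=\bigcap_{m\in\N}H^m(X)$ together with the inclusion $S\subseteq H^n(S)$ for every $n$; hence the entire content of the statement is the reverse inclusion $H^n(S)\subseteq S$. Because $S$ is the decreasing intersection of the sets $H^m(X)$, proving $H^n(S)\subseteq S$ amounts to showing that $f_u(s)\in H^m(X)$ for every $u\in\mathcal{L}_n(\Sigma)$, every $s\in S$ and every $m\in\N$; that is, $f_u(S)\subseteq S$ for each admissible word $u$. I would also record at the outset that an irreducible SFT possessing a fixed point $a_0^{\infty}$ carries a length-one cycle in its graph presentation, hence has period one and is mixing; this is the structural fact one expects to turn a self-similarity that holds a priori only for large, or for progression-restricted, orders into one valid for every $n$.

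First I would make explicit the description of $S$ hidden in the monotonicity step of Proposition~\ref{prop AX}. Since $H^{m+1}(X)\subseteq H^m(X)$ is witnessed by deleting the first letter of a word, a point $s$ belongs to $S$ precisely when there is a left-infinite $\Sigma$-admissible sequence $\cdots\sigma_{-1}\sigma_0$ all of whose length-$m$ suffixes $w_m=\sigma_{-m+1}\cdots\sigma_0$ satisfy $s\in f_{w_m}(X)$, in which case $\{s\}=\bigcap_m f_{w_m}(X)$ because $\mathcal{F}$ is a contraction and the diameters tend to $0$. Applying $f_u$ then yields $f_u(s)\in f_{w_m u}(X)$ for all $m$, where $w_m u$ denotes concatenation. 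If each $w_m u$ were admissible, then $f_u(s)\in H^{m+n}(X)$ for all $m$ and the proof would be immediate.

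The hard part will be exactly that $w_m u$ need not lie in $\mathcal{L}(\Sigma)$: the junction between the final letter $\sigma_0$ of the ray and the first letter $u_1$ of $u$ can form a forbidden block, so that the cylinder $f_{w_m u}(X)$ is no longer indexed by an admissible word. This is the single place where both hypotheses must enter, and overcoming it is the crux. The intended remedy is to exploit that $\Sigma$ is an irreducible SFT carrying the self-loop $a_0$: by irreducibility every state communicates with $a_0$, so one steers the relevant addresses through a long run of the fixed-point symbol before the block $u$, producing genuinely admissible words $v_m\in\mathcal{L}_m(\Sigma)$ with $f_u(s)\in f_{v_m}(X)$ for every length $m$. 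The length-one loop at $a_0$ is decisive, since it permits words of every length to be assembled, which is precisely the mechanism that upgrades self-similarity from an arithmetic progression of orders to all of $\N$.

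Granting this, $f_u(s)\in\bigcap_m H^m(X)=S$, so $H^n(S)\subseteq S$ and therefore $H^n(S)=S$ for all $n$; alternatively, once the order-one case $H(S)=S$ is secured in this way, Remark~\ref{rem equivalent for S} propagates it to every order. I expect the junction-repair step to be the only real difficulty: one must keep the modified addresses inside $\Sigma$ while still forcing them to converge to the prescribed point $f_u(s)$, and it is here, and only here, that the irreducibility of $\Sigma$ and the existence of the fixed point are genuinely used; everything else is bookkeeping on top of Proposition~\ref{prop AX}.
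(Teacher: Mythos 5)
Your reduction of the statement to proving $f_u(S)\subseteq S$ for every $u\in\mathcal{L}_n(\Sigma)$, and your description of $S$ as the set of points coded by left-infinite admissible sequences with $\{s\}=\bigcap_m f_{w_m}(X)$ along the suffixes $w_m$, are both correct (the extraction of a single coherent ray needs a K\"onig-type argument, but that is routine). This is also a genuinely different route from the paper, which never argues symbolically: it factors a full shift onto $\Sigma$ via Boyle's Lower Entropy Factor Theorem and appeals to Proposition~\ref{prop H(S) is preserved}. The gap in your proposal is exactly the step you yourself flag as the crux, and it is not a repairable difficulty but an obstruction. With the convention $f_u=f_{u_n}\circ\cdots\circ f_{u_1}$, the \emph{rightmost} letters of an address are the \emph{outermost} maps, so they determine the coarse location of the cylinder $f_p(X)$; the only end of an address of $s$ that can be edited while still coding $s$ is the far left (innermost) end, and that end never meets the junction with $u$. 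Your proposed repair inserts a run of $a_0$'s and a connector $c$ immediately before $u$, but then $f_{q\,a_0^r\,c\,u}(X)=f_u\bigl(f_c(f_{a_0^r}(f_q(X)))\bigr)$ accumulates near $f_u(f_c(x_0))$, where $x_0$ is the fixed point of $f_{a_0}$, not near $f_u(s)$: the modified addresses simply code the wrong point.

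In fact no repair can exist, because the corollary is false as stated. Let $\Sigma$ be the golden mean shift over $\{1,2\}$ (forbidden word $22$), an irreducible SFT with fixed point $1^\infty$, and take $X=[0,1]$, $f_1(x)=\tfrac13x$, $f_2(x)=\tfrac13x+\tfrac23$. Reading digit $0$ for symbol $1$ and digit $2$ for symbol $2$, a point lies in $S=\bigcap_nH^n(X)$ iff it has a ternary expansion with digits in $\{0,2\}$ whose digit string (the reversed address) contains no block $22$. Then $3/4=0.\overline{20}_3\in S$, while $f_2(3/4)=11/12=0.22\overline{02}_3$; since $11/12$ is not a triadic rational its expansion is unique, it begins with $22$, and so $f_2(3/4)\notin S$. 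Hence $H(S)\supsetneq S$, and more generally $f_{21^{n-1}}(3/4)=3^{-(n-1)}\cdot\tfrac{11}{12}\notin S$ with $21^{n-1}\in\mathcal{L}_n(\Sigma)$, so $H^n(S)\neq S$ for every $n$ (only the inclusion $S\subseteq H^n(S)$ of Proposition~\ref{prop AX} survives). You should know that the paper's own proof also fails here: it requires a 1-block factor code from a genuine full shift onto $\Sigma$, which cannot exist when $\Sigma$ is proper (a 1-block image of a full shift is a full shift), and after the higher-block recoding that makes the code 1-block, the extension's Hutchinson operator coincides with $H$ itself, so the appeal to Proposition~\ref{prop H(S) is preserved} is circular. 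Your inability to close the junction step is therefore not a missing trick on your part; it is the precise point at which the statement itself breaks down.
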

\begin{proof}
	Let $\tilde{\Sigma}$ be a full shift with $h(\tilde{\Sigma})>h(\Sigma)$. Since $\Sigma$ has a fixed point, by Boyle's Lower Entropy Factor Theorem \cite[Theorem 10.3.1]{lind2021introduction}, there exists a factor code $\varphi_1$ factoring $\tilde{\Sigma}$ with alphabet $\tilde{\mathcal{A}}$ onto $\Sigma$. Without loss of generality, assume $\varphi$ is a 1-block factor code. For $\varPhi_1(\tilde{a})=a$, set $\tilde{f}_{\tilde{a}}:=f_a$ and let $\tilde{\mathcal{F}}=\{\tilde{f}_{\tilde{a}}:\;\tilde{a}\in\tilde{\mathcal{A}}\}$. Now $\varphi=(\varphi_{1},\, id)$ factors $\tilde{\mathfrak{I}}=(X,\,\tilde{\mathcal{F}},\,\tilde{\Sigma})$ onto \IFS\!, and the proof follows from Proposition \ref{prop H(S) is preserved}.
\end{proof}

\begin{example}
	Consider $\mathfrak{I}'=(X,\,\mathcal{F},\,\Sigma')$ where $\mathcal{F}$ is a contractive homeomorphism and $\Sigma'$ is the even shift, generated by $\{21^{2n}:\; n\in\N_0=\N\cup\{0\}\}$. Recall that $\Sigma'$ is an irreducible non-SFT sofic shift and is a factor of the golden shift $\Sigma$ over $\{1,\,2\}$, with a forbidden set of $\{22\}$. Here, the factor code is $\varphi$ with a block map $\varPhi$ defined as $\varPhi(11)=2$, $\varPhi(12)=1$, and $\varPhi(21)=1$ \cite[Example 1.5.6]{lind2021introduction}. Set $\mathfrak{I}=(X,\,\mathcal{F},\,\Sigma)$. Since the golden shift is irreducible and has a fixed point, by Corollary \ref{cor SFT S} and Proposition \ref{prop H(S) is preserved}, both $\mathfrak{I}$ and $\mathfrak{I}'$ have self-similarities of all orders.
\end{example}

The sub-shift in the first IFS in Example \ref{ex 12infty} is an irreducible SFT with a set of forbidden words $\{11,\,22\}$ without a fixed point. There, $S\neq H(S)$, and thus having a fixed point for $\Sigma$ is a necessity for Corollary \ref{cor SFT S}.

Recall that if $\Sigma$ is a coded shift (defined in Definition \ref{defn coded}), then there are increasing SFT's $\Sigma_i$ such that \begin{equation}\label{eq coded union of SFT}
	\Sigma=\overline{\bigcup_{i=1}^\infty\Sigma_i}.
\end{equation}  
This sequence of $\Sigma_i$ is not usually unique. We say $\Sigma$ is \emph{rooted in a fixed point} if there is a sequence of SFT's such that \eqref{eq coded union of SFT} holds and $\bigcup_{i=1}^\infty\Sigma_i$ contains a fixed point.

\begin{example}
	One well-studied class of sub-shifts is density shifts \cite[Definition 3.55]{bruin2022topological}. When a non-trivial density shift with a canonical function has at least two periodic points, then it is a coded shift rooted in a fixed point, namely $0^\infty$. This can be deduced from \cite[Theorem 3.62]{bruin2022topological} by replacing each $\Sigma_i$ with $\Sigma'_i$ so that $\Sigma_i\subseteq \Sigma'_i\subseteq\Sigma$, and such that if $a\in\mathcal{A}$, $uav\in \mathcal{L}({\Sigma_{i}})$, then $ua'v\in \mathcal{L}(\Sigma'_{i})$ for $a'<a$. Note that $\Sigma'_i$ is indeed an SFT, as we are reducing its set of forbidden sets with respect to those of $\Sigma$, and $0^\infty\in\Sigma'_i$ for all $i$.
\end{example}

\begin{lem}\label{lem coded H(S)}
	Assume \IFS with $\mathcal{F}$ contraction and $\Sigma$ a coded shift rooted in a fixed point.
	Then, for all $n\in\N$, we have
	$H^n(S)=S$ .
\end{lem}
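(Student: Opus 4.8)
The plan is to exploit Corollary~\ref{cor SFT S} on each approximating SFT and then pass to the limit. By Proposition~\ref{prop AX} we already have $S\subseteq H^n(S)$ for every $n$, so the entire task reduces to the reverse inclusion $H^n(S)\subseteq S$; equivalently, $f_u(s)\in S$ for every $u\in\mathcal{L}_n(\Sigma)$ and $s\in S$. First I would unpack the hypothesis: since $\Sigma$ is coded and rooted in a fixed point, there is an increasing sequence of irreducible SFTs $\Sigma_i$ with $\Sigma=\overline{\bigcup_i\Sigma_i}$ (equation~\eqref{eq coded union of SFT}) such that $\bigcup_i\Sigma_i$ contains a fixed point; as the $\Sigma_i$ increase, all $\Sigma_i$ with $i\ge i_0$ are irreducible SFTs possessing that fixed point. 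For such $i$ set $\mathfrak{I}_i=(X,\,\mathcal{F},\,\Sigma_i)$ with attractor $S_i$ and Hutchinson iterate $H_i^n$. Because $\mathcal{L}_n(\Sigma_i)\subseteq\mathcal{L}_n(\Sigma_{i+1})\subseteq\mathcal{L}_n(\Sigma)$, we get $H_i^n(X)\subseteq H_{i+1}^n(X)\subseteq H^n(X)$ and hence $S_i\subseteq S_{i+1}\subseteq S$. Corollary~\ref{cor SFT S} applies to each $\mathfrak{I}_i$ with $i\ge i_0$, giving $H_i^n(S_i)=S_i$, that is, $f_u(S_i)\subseteq S_i$ for every $u\in\mathcal{L}_n(\Sigma_i)$.

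The crucial intermediate step, and the part I expect to be the main obstacle, is the identity $S=\overline{\bigcup_i S_i}$. The inclusion $\overline{\bigcup_i S_i}\subseteq S$ is immediate since each $S_i\subseteq S$ and $S$ is compact. For density I would first note $\mathcal{L}_n(\Sigma)=\bigcup_i\mathcal{L}_n(\Sigma_i)$: any admissible word of $\Sigma$ occurs in some point of $\Sigma$, and since cylinders are open and $\bigcup_i\Sigma_i$ is dense, it already occurs in some $\Sigma_i$. Now fix $s\in S$ and $\epsilon>0$; using that the maps are contractions, choose $n$ with $\mathrm{diam}\,f_u(X)<\epsilon$ whenever $|u|=n$. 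Since $s\in H^n(X)$ there is $u\in\mathcal{L}_n(\Sigma)$ with $s\in f_u(X)$, and $u\in\mathcal{L}_n(\Sigma_j)$ for some $j\ge i_0$. The technical heart is to produce a point of $S_j$ inside $f_u(X)$: because $\Sigma_j$ is irreducible, $u$ extends to a left-infinite admissible sequence $\cdots a_2a_1u$ in $\Sigma_j$, and since $f_{au}(X)=f_u(f_a(X))\subseteq f_u(X)$, prepending symbols yields a nested family $f_{a_m\cdots a_1u}(X)$ of shrinking diameter whose intersection is a single point $p\in f_u(X)$ with $p\in\bigcap_k H_j^k(X)=S_j$. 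Then $d(s,p)\le\mathrm{diam}\,f_u(X)<\epsilon$, which proves density.

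Finally I would close by a continuity argument. Fix $n$, take $u\in\mathcal{L}_n(\Sigma)$ and $s\in S$. By the density just established together with the monotonicity $S_j\subseteq S_{j+1}$, choose $s_j\in S_j$ with $s_j\to s$. Choose $j_1\ge i_0$ with $u\in\mathcal{L}_n(\Sigma_{j_1})$, so that $u\in\mathcal{L}_n(\Sigma_j)$ for all $j\ge j_1$. For such $j$ we have $f_u(s_j)\in f_u(S_j)\subseteq S_j\subseteq S$ by the consequence of Corollary~\ref{cor SFT S} recorded above. Since $f_u$ is continuous, $f_u(s_j)\to f_u(s)$, and as $S$ is closed we conclude $f_u(s)\in S$. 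This gives $H^n(S)\subseteq S$, and combined with $S\subseteq H^n(S)$ from Proposition~\ref{prop AX} we obtain $H^n(S)=S$ for every $n\in\N$, as required.
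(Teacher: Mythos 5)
Your proof is correct, and its skeleton coincides with the paper's: approximate $\Sigma$ by the increasing irreducible SFTs $\Sigma_i$ of \eqref{eq coded union of SFT} that contain the fixed point, apply Corollary~\ref{cor SFT S} to each $\mathfrak{I}_i=(X,\,\mathcal{F},\,\Sigma_i)$, and pass to the limit. The execution differs in two ways, though. First, the paper reduces to the case $n=1$ via Remark~\ref{rem equivalent for S}, reindexes the $\Sigma_i$ so that $\mathcal{L}_n(\Sigma)=\mathcal{L}_n(\Sigma_n)$, proves $\lim_i S_i=S$ in $(\mathcal{C},\,d_H)$, and then takes the Hausdorff limit in the identities $S_i=\bigcup_j f_j(S_i)$; you keep $n$ arbitrary, prove the set-theoretic density $S=\overline{\bigcup_i S_i}$, and conclude pointwise from $f_u(S_j)\subseteq S_j$ using continuity of $f_u$ and closedness of $S$. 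These are the same limit in different clothing (hyperspace versus pointwise), and your route avoids both the reduction and the reindexing. Second, and more substantively, the convergence step is exactly where the paper is terse: its stated justification, $S_i\subseteq H_i^n(X)\in B_H(S;\,\epsilon)$, controls only one side of the Hausdorff distance --- it shows points of $S_i$ lie near $S$, i.e. $\overline{\bigcup_i S_i}\subseteq S$ --- whereas what the limiting argument actually requires is the reverse direction, that every point of $S$ is approximated by points of some $S_j$. Your left-prolongation construction (irreducibility of $\Sigma_j$ yields arbitrarily long admissible words ending in $u$, hence a nested shrinking family $f_{a_m\cdots a_1u}(X)$ whose intersection is a point of $S_j\cap f_u(X)$) supplies precisely this missing half, so your write-up is the more complete of the two. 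One cosmetic remark: irreducibility directly provides arbitrarily long \emph{finite} left prolongations of $u$, which is all the nested-compacta argument needs; the left-infinite sequence you invoke is a harmless compactness extraction on top of that (and is automatic for two-sided shift spaces).
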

\begin{proof}
	Since $\Sigma_i\subseteq\Sigma_{i+1}$, assume that $\Sigma_1$ has a fixed point. By Remark \ref{rem equivalent for S}, it suffices to prove $H(S)=S$. Consider $\mathfrak{I}_i=(X,\,\mathcal{F},\,\Sigma_i)$ and let $\mathcal{A}_i$ be the set of alphabets for $\Sigma_i$. Also,  let $S_i=\lim_n H_i^n(X)$ be the attractor of $\mathfrak{I}_i$, with $H_i$ being the Hutchinson operator for $\mathfrak{I}_i$.
	
	 As $\Sigma_i$ is increasing, we have $S_i\subseteq S_{i+1}$, implying $\{S_i\}_i$ has a limit $S^*$ in the compact space $(\mathcal{C},\,d_H)$. We show that $S=S^*$. Note that $\mathcal{L}_n(\Sigma)=\bigcup_{i=1}^\infty \mathcal{L}_n(\Sigma_i)$, and as $\mathcal{L}_n(\Sigma_i)\subseteq \mathcal{L}_n(\Sigma_{i+1})$ and $\mathcal{L}_n(\Sigma)$ is finite for all $n$, there exists $n_i$ such that $\mathcal{L}_n(\Sigma)=\mathcal{L}_n(\Sigma_{n_i})$ for $n\geq n_i$. By selecting a suitable subsequence of $\{\Sigma_i\}_i$, we can assume $\mathcal{L}_n(\Sigma)=\mathcal{L}_n(\Sigma_{n})$. In particular, $\mathcal{A}=\mathcal{A}_i$ for all $i$.
	
	For any $\epsilon$-neighborhood of $S$, denoted $B_H(S;\,\epsilon)$, there exists $N$ such that for $n>N$, $H^n(X)\in B_H(S;\,\epsilon)$. This implies $S_i\subseteq H_i^n(X)\in B_H(S;\,\epsilon)$ for such $n$, leading to $\lim_iS_i=S$ and thus $S=S^*$ as desired.
	
	To demonstrate $S=H(S)$, from Corollary \ref{cor SFT S}, for all $i$, we have $S_i=H(S_i)=\bigcup_{j=1}^{|\mathcal{A}|}f_j(S_i)$. Therefore,
	\begin{align*}
		S=\lim_i S_i&=\lim_i(\bigcup_{j=1}^{|\mathcal{A}|}f_j(S_i))\\
		&=\bigcup_{j=1}^{|\mathcal{A}|}f_j(\lim_i S_i)=\bigcup_{j=1}^{|\mathcal{A}|}f_j(S)=H(S).
	\end{align*}
\end{proof}
\begin{rem}
	A coded shift may possess a fixed point without being rooted in a fixed point. For example, let $\Sigma$ be generated by $\mathcal{W}=\{21^{2n-1}:\;n\in\N\}$. This $\Sigma$ is not rooted in a fixed point, despite $1^\infty\in \Sigma$. Observe that $\Sigma$ is a non-mixing synchronized system with a
	synchronizing word $2$. It is non-mixing for if $u=v=21$, then any $w$ with $uwv\in\mathcal{L}(\Sigma)$ has even length violating definition of mixing in         Remark \ref{rem mixing}.
	On the other hand, a synchronized coded shift is mixing if and only if it contains at least one generator $G$ such that gcd$\{|w|:\; w\in G\}=1$ \cite[Theorem 4.1]{dastjerdi2019mixing}. Similar to the proof of Lemma \ref{lem coded H(S)}, one can take $\Sigma=\overline{\bigcup_{i=1}^\infty\Sigma_i}$ with
	$\Sigma_i$ an irreducible SFT containing a fixed point. Then by Boyle's Lower Entropy Theorem,
	$\Sigma_1$ is a factor of a full shift and thus mixing and must have a generator $\mathcal{W}_1$ with gcd$\{|w|:\;w\in \mathcal{W}_1\}=1$. Extend $\mathcal{W}_1$ to a generator $\mathcal{W}$ for $\Sigma$ with   gcd$\{|w|:\;w\in \mathcal{W}\}=1$ to see that $\Sigma$ will be mixing which is absurd.
\end{rem}

When
$\Sigma$
is the full shift, then for
$n \in \mathbb{N}$,
$S= H^n(S)=\bigcup_{u \in \mathcal{L}_{n}(\Sigma_{|\mathcal{A}|})}f_{u}(S)$.
However, $|\mathcal{L}_{n}(\Sigma_{|\mathcal{A}|})|=|\mathcal{A}|^n$.
So when $\mathcal{F}$ is a homeomorphism, $S$ is a union of $|\mathcal{A}|^n$ parts, each similar to itself.
For a general coded
$\Sigma$,
we have the following instead.
\begin{prop}\label{prop coded mixing}
	Let
	$\Sigma$
	be a coded shift in \IFS\!.
	Then for infinitely many $N\in\N$
	\begin{equation}\label{eq S=fu(S)}
		S= \bigcup_{u \in \mathcal{L}_{N}(\Sigma)}f_{u}(S).
	\end{equation}
	Moreover, if $\Sigma$ is mixing and half-synchronized, then there is $M\in\N$ such that \eqref{eq S=fu(S)} holds for $N\geq M$.

	Hence, when
	$\mathcal{F}$
	is a homeomorphism, then for infinitely many
	$N$,
	$\{f_{u}: u \in \mathcal{L}_{N}(\Sigma)\}$
	reveals some self-similarities for
	$S$.
\end{prop}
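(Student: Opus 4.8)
The plan is to prove both assertions by reducing, for a suitable $N$, the desired identity $S=\bigcup_{u\in\mathcal{L}_N(\Sigma)}f_u(S)=H^N(S)$ to the inclusion $H^N(S)\subseteq S$, since the reverse inclusion $S\subseteq H^N(S)$ is automatic by Proposition \ref{prop AX}. The engine for producing such an $N$ is the proof scheme of Lemma \ref{lem coded H(S)}: writing the coded shift as $\Sigma=\overline{\bigcup_i\Sigma_i}$ with $\Sigma_i$ an increasing sequence of irreducible SFT (refined so that $\mathcal{A}_i=\mathcal{A}$ and $\mathcal{L}_n(\Sigma)=\mathcal{L}_n(\Sigma_n)$), one gets $S=\lim_i S_i$ in $(\mathcal{C},d_H)$, where $S_i$ is the attractor of $\mathfrak{I}_i=(X,\mathcal{F},\Sigma_i)$. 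Because $H^N$ commutes with this limit exactly as in the closing display of Lemma \ref{lem coded H(S)}, it suffices to produce a single $N$ for which $H^N(S_i)=S_i$ holds for every $i$; passing to the limit then yields $H^N(S)=S$.

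To manufacture such an $N$ I would pass to a higher power system. Fix a generator word $w\in\mathcal{W}$ with $|w|=L$; then $w^\infty\in\Sigma$, and after enlarging the generating set we may assume $w^\infty\in\Sigma_1\subseteq\Sigma_i$ for all $i$. In the $L$th power system $(X,\{f_u:u\in\mathcal{L}_L(\Sigma_i)\},\Sigma_i^L)$ --- whose attractor is again $S_i$ and whose Hutchinson operator is $H^L$ --- the symbol $[w]$ carries the fixed point $[w]^\infty$. Provided the component of $\Sigma_i^L$ through this fixed point can be arranged to be an irreducible SFT that still exhausts $S_i$, Corollary \ref{cor SFT S} gives $H^L(S_i)=S_i$ for all $i$, hence $H^{N_0}(S)=S$ with $N_0=L$. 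Finally Remark \ref{rem equivalent for S} promotes one such equality to infinitely many, since $H^{N_0}(S)=S$ forces $H^{kN_0}(S)=S$ for every $k\in\N$; this is \eqref{eq S=fu(S)} along the infinite set $\{kN_0\}_k$.

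For the mixing half-synchronized case I would invoke \cite[Theorem 3.4]{dastjerdi2019mixing} to obtain a generator $\mathcal{W}$ with $\gcd\{|w|:w\in\mathcal{W}\}=1$. The numerical semigroup generated by these lengths then contains every integer $N\ge M$ for some Frobenius bound $M$, so for each such $N$ there is a concatenation $z$ of generators with $|z|=N$, yielding a periodic point $z^\infty\in\Sigma$ whose class $[z]^\infty$ is a fixed point of $\Sigma^N$. Here the period obstruction evaporates: since $\Sigma$ is mixing, so is $(\Sigma,\tau^N)$, whence $\Sigma^N$ is irreducible, and the approximating SFT $\Sigma_i$ may be chosen mixing and containing $z^\infty$, so that $\Sigma_i^N$ is a mixing (in particular irreducible) SFT with the fixed point $[z]^\infty$. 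Corollary \ref{cor SFT S} then gives $H^N(S_i)=S_i$ for all $i$, and the limit argument gives $H^N(S)=S$ for every $N\ge M$, which is the second assertion.

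The delicate point, and the step I expect to be the real obstacle, is precisely the irreducibility of the power shift carrying the fixed point. Taking the $N$th power of an irreducible shift of period $p$ splits it into $\gcd(N,p)$ cyclic components, while a point fixed by $\tau^N$ has least period a multiple of $p$ dividing $N$, forcing $p\mid N$ and hence $\gcd(N,p)=p$; so when $p>1$ the power $\Sigma^N$ need not be irreducible and Corollary \ref{cor SFT S} cannot be applied verbatim to all of $S_i$. This is exactly what fails for the degenerate orbit $\Sigma=\{(12)^\infty,(21)^\infty\}$ of Example \ref{ex 12infty}, where no self-similarity survives. In the mixing case $p=1$ and the difficulty disappears, which is why the second assertion is clean; for the first assertion one must either restrict to the irreducible component of $\Sigma_i^L$ that still exhausts $S_i$, or, equivalently, arrange $w$ so that $[w]^\infty$ sits inside a fixed-point--rooted coded subsystem, thereby bringing Lemma \ref{lem coded H(S)} itself to bear at the power level.
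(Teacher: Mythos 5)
Your strategy is the paper's strategy: pick a generator word $w$ with $|w|=N$, pass to the $N$th higher power shift so that $w^\infty$ becomes a fixed point of $\tau^N$, identify the attractor of the power system with $S$ via $\mathcal{L}_n(\Sigma^N)=\mathcal{L}_{nN}(\Sigma)$, conclude $H^N(S)=S$ from the rooted-in-a-fixed-point machinery, get infinitely many $N$ by enlarging the generator by powers of $w$ (equivalently Remark \ref{rem equivalent for S}), and treat the mixing half-synchronized case with a gcd-one generator from \cite{dastjerdi2019mixing} plus B\'ezout, exactly as the paper does. The only real difference is how the key step is certified: the paper applies Lemma \ref{lem coded H(S)} to $(\Sigma^N,\tau^N)$ wholesale, asserting that this power shift is coded and rooted in $w^\infty$, whereas you descend to the approximating irreducible SFTs $\Sigma_i$ and try to apply Corollary \ref{cor SFT S} to their powers $\Sigma_i^N$.

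That descent exposes the period obstruction you flag, and since you leave the first assertion conditional on resolving it, your proof is formally incomplete; but the obstruction is not an artifact of your route --- it is precisely the unproven step in the paper's proof. If $\Sigma$ has period $p>1$, then $p$ divides every generator word length, so for $N=|w|$ the power shift $\Sigma^N$ splits into $p$ disjoint closed $\tau^N$-invariant pieces; it is then not irreducible, hence not coded, and it is not rooted in $w^\infty$ (note also that the paper's inference ``coded with a fixed point, hence rooted'' is disclaimed by its own Remark following Lemma \ref{lem coded H(S)}). Moreover, your example is decisive rather than merely illustrative: for $\Sigma=\{(12)^\infty,(21)^\infty\}$, which is coded with generator $\{12\}$, and the maps of Example \ref{ex 12infty}, one computes $f_{(12)^n}(\tfrac14)=\tfrac34-\tfrac{1}{2\cdot 9^n}\in H^{2n}(S)\setminus S$ (and similarly for odd lengths), so \eqref{eq S=fu(S)} fails for \emph{every} $N\geq 1$. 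Hence the first assertion is false as stated, no completion of your argument or of the paper's is possible, and your proposed salvage cannot work either: the irreducible component of $\Sigma^2$ through $[12]^\infty$ is a single fixed point whose attractor $\{\tfrac34\}$ does not exhaust $S$. An extra hypothesis (period one, or being rooted in a fixed point as in Lemma \ref{lem coded H(S)}, in which case all orders follow at once) is genuinely needed. For the second assertion your argument coincides with the paper's and the period obstruction disappears there; the one claim you should still justify is that the approximating SFTs can be taken mixing and containing $z^\infty$ --- e.g.\ by arranging each $\Sigma_i$ to contain periodic points of two coprime periods, which forces aperiodicity --- since the paper's appeal to ``the above argument'' leaves exactly the same point (codedness and rootedness of $\Sigma^N$) unestablished.
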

\begin{proof}
	Let $w$ be a member of an arbitrary generator $\mathcal{W}$ of $\Sigma$ with $|w|=N$ and
	consider
	$\mathfrak{I}_{w}=(X,\,\mathcal{F}_{w},\,\Sigma^N )$ where
	$\mathcal{F}_{w}=\{f_{u}: u \in \mathcal{L}_{N}(\Sigma)\}$ and let $S_w$ and $H_w$ be the attractor and the Hutchinson operator for $\mathfrak{I}_w$ respectively.
	Note that $w$ is a character of $\Sigma^{N}$ and
	$(\Sigma^{N},\, \tau^{N})$ is a coded shift with
	$\tau^{N}(w^{\infty})= w^{\infty}$; thus
	$(\Sigma^{N}, \tau^{N})$
	is rooted in
	$w^{\infty}$ and
	by applying Lemma \ref{lem coded H(S)}, \eqref{eq S=fu(S)} holds.
	Since $\mathcal{L}_n(\Sigma^N)=\mathcal{L}_{nN}(\Sigma)$, $H_w^n(X)= H^{nN}(X)$ and so
	$S_w=\lim_n H_w^n(X)= \lim_n H^{nN}(X)=S$. The last inequality arises from the fact that $\{H^{nN}(X)\}_n$ is a subsequence of the convergent sequence $\{H^n(X)\}_n$.
	But if
	$w \in \mathcal{W}$,
	then
	$w'= \mathcal{W} \bigcup \{w^{\ell}: \ell \in \mathbb{N}\}  $
	is also a generator for
	$\Sigma$
	and hence
	\eqref{eq S=fu(S)}
	holds for infinitely many
	$N$. (This last part can be deduced from Remark \ref{rem equivalent for S} as well.)
	
	Now assume that
	$\Sigma$
	is a half-synchronized mixing shift. Then, there is a generator
	$\mathcal{W}$
	with
	$w_{1},\, w_{2} \in \mathcal{W}$
	and
	$gcd(w_1,\, w_2)=1$ \cite[\S 3]{dastjerdi2019mixing}.
	By this fact and  by an application of B\'ezout Theorem,
	for any sufficiently large
	$M$,
	there are
	$a_M , b_M \in \mathbb{N}$
	such that
	$M=a_M\mid w_1 \mid + b_M \mid w_2\mid$. This means
	$\mathcal{W} \bigcup \{w_{1}^{a_M} w_{2}^{b_{M}}\}  $
	is again a generator containing a word of length $M$ and the
	proof follows by the above argument.
\end{proof}
Later in Example \ref{ex rooted required}, we will see that there is an IFS with a mixing coded sub-shift that does not have self-similarity of some orders. Thus to have
such self-similarities,
the property of being rooted in the fixed point as in Lemma \ref{lem coded H(S)} is necessary and mixing alone does not suffice.

\subsubsection{Equivalent characterization of the attractor}
The initial characterization is that $S$ can be interpreted as the limit points of any $x\in X$ of some specific sequence of $f_u(x)$'s. For reference purposes, we state this as a remark.

\begin{rem}\label{rem u<v}
	For $n\in\N$, consider ``<" to be the lexicographic order in $\mathcal{L}_n(\Sigma)$ (though any order suffices, we opt for specificity). Extend this to words in $\mathcal{L}(\Sigma)$ by defining
	\begin{equation*}
		u<v \text{ whenever }
		\begin{cases}
			u<v, & u,\, v\in\mathcal{L}_n(\Sigma),\\
			|u|<|v|, & u,\, v\not\in\mathcal{L}_n(\Sigma),
		\end{cases}
	\end{equation*}
	and denote $\mathcal{L}({\Sigma})=\{u^{(1)},\,u^{(2)},\,\ldots\}$ where $u^{(i)}<u^{(i+1)}$.
	Now for $x\in X$, $\{x\}$ is compact and
	$\lim_m H^m(\{x\})=S$.
	Thus, for $\epsilon>0$, $s\in S$, and sufficiently large $m$, $B(s;\,\epsilon)\cap H^m(\{x\})\neq\emptyset$, implying there exists
	at least one lengthy $u^{(n)}$ such that $f_{u^{(n)}}(x)\in B(s;\,\epsilon)$. This signifies
	that in the space $(X,\,d)$, $S$ serves as the limit set of $\{f_{u^{(n)}}(x)\}_{n\in\N}$.
	One can substitute $\{x\}$ with any $A\in\mathcal{C}$ to reach the same deduction. Namely, $s\in S$ if and only if for $\epsilon>0$, there exists $u$ with sufficiently large length such that $f_u(A)\subset B(s;\,\epsilon)\subset X$ or
	$f_u(A)\in B_H(\{s\};\,\epsilon)\subset \mathcal{C}$ for $A\in \mathcal{C}$.
	Here $ B_H(\{s\};\,\epsilon)$ stands for a ball in $\mathcal{C}$ centered at $\{s\}$.
\end{rem}

\begin{defn}\label{defn Ssigma}
	Let
	$\sigma=\sigma_1\sigma_2\cdots\in\Sigma$ and define
	$$\Sigma_{\sigma}
	=\overline{\{\tau^{n}(\sigma):n \in \N\}}.$$
	Then,
	$\Sigma_{\sigma}$
	forms a sub-shift and
	$\mathfrak{I}_{\sigma} = (X,\, \mathcal{F}_\sigma,\,\Sigma_{\sigma}) $ represents an IFS where  $\mathcal{F}_\sigma\subset\mathcal{F}$ includes  maps like $f_\ell$, $\ell=\sigma_i$ for some $i$.
	Consider
	$S_{\sigma}$
	as the corresponding attractor for $\mathfrak{I}_{\sigma}$.
	Likewise, for any non-empty $A\in\mathcal{C}$ within $(\mathcal{C},\,d_H)$, define
	\begin{align}\label{eq limf}
		f_\sigma(A)&=\text{ limit set of }\{f_{\sigma_1\cdots\sigma_m}(A):\; m\in\N\}\nonumber\\
		&=\{f_{\sigma_1\cdots\sigma_m}(A):\; m\in\N\}'.
	\end{align}
\end{defn}
Since $f_{\sigma_1\cdots\sigma_m}(A)\in\mathcal{C}$, $f_\sigma(A)$ is non-empty in $\mathcal{C}$.
\begin{prop} \label{prop gifs}
	Let	\IFS with $\mathcal{F}$  contraction and $S$ its attractor.
		Then,
	\begin{enumerate}
		\item\label{Sigma'}
		if $\bar{\Sigma}$ is a subsystem of $\Sigma$, then $\bar{S}\subseteq S$ where $\bar{S}$ is the attractor of $(X,\,\mathcal{F},\,\bar{\Sigma})$.
		\item \label{item set}
		for any $\sigma\in\Sigma$ and any $A\in\mathcal{C}$, $S_\sigma=f_\sigma(A)$.
		In particular,
		 for a transitive point $t=t_1t_2\cdots\in\Sigma$, $S=S_t=f_t$.
		\item \label{prop cup Sigma}
		for any  $A\in\mathcal{C}$
		\begin{equation}\label{eq Ssigma}
			S=\bigcup_{\sigma\in\Sigma}f_\sigma(A).
		\end{equation}
		\item \label{item fixed & periodic}
$f_\sigma(A)$ is finite whenever there are	 $v,\,u=u_1\cdots u_{m}\in\mathcal{L}(\Sigma)$
		such that $\sigma=vu^\infty$. In that case, 
		$S_\sigma =\{x_0, f_{u_{1}}(x_0), \ldots ,f_{u_{1} \ldots u_{m-1}}(x_0)\}$
		where
		$x_0$
		is the unique fixed point of $f_u$.
		\item \label{item dense peiodic}
		if periodic points are dense in $\Sigma$, then periodic points are dense in $S$ as well.
			\end{enumerate}
\end{prop}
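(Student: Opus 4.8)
The plan is to establish the five parts in the order \ref{Sigma'}, \ref{item set}, \ref{prop cup Sigma}, \ref{item fixed & periodic}, \ref{item dense peiodic}, using the earlier parts as an engine and leaning throughout on the attractor characterization of Remark \ref{rem u<v}. Part \ref{Sigma'} is pure monotonicity: $\bar\Sigma\subseteq\Sigma$ forces $\mathcal L_n(\bar\Sigma)\subseteq\mathcal L_n(\Sigma)$, hence $\bar H^n(X)\subseteq H^n(X)$ for all $n$, and intersecting the two nested families from Proposition \ref{prop AX} gives $\bar S=\bigcap_n\bar H^n(X)\subseteq\bigcap_n H^n(X)=S$. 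For part \ref{item set} I would prove the two inclusions of $S_\sigma=f_\sigma(A)$ by applying Remark \ref{rem u<v} to $\mathfrak I_\sigma$, for which $\mathcal L(\Sigma_\sigma)$ is exactly the set of finite subwords of $\sigma$. The inclusion $f_\sigma(A)\subseteq S_\sigma$ is soft: each prefix $\sigma_{[1,m]}$ is a subword of $\sigma$, so $f_{\sigma_{[1,m]}}(A)\subseteq H_\sigma^{m}(A)$, and since $H_\sigma^{m}(A)\to S_\sigma$ in the Hausdorff metric every accumulation point of the prefix images lies in the closed set $S_\sigma$. The reverse inclusion is the heart of this part and uses the identity $f_{\sigma_{[1,p+\ell]}}=f_w\circ f_{\sigma_{[1,p]}}$ for $w=\sigma_{[p+1,p+\ell]}$: given $s\in S_\sigma$, Remark \ref{rem u<v} supplies arbitrarily long subwords $w$ of $\sigma$ with $f_w(X)\subset B(s;\epsilon)$, and completing $w$ on the left to the prefix ending where $w$ ends yields $f_{\sigma_{[1,p+\ell]}}(A)\subseteq f_w(X)\subset B(s;\epsilon)$ with $p+\ell\to\infty$, so $s\in f_\sigma(A)$. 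The ``in particular'' clause follows since a transitive $t$ has $\Sigma_t=\Sigma$ and $\mathcal F_t=\mathcal F$, whence $\mathfrak I_t=\mathfrak I$ and $S_t=S=f_t(A)$.

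Part \ref{prop cup Sigma} splits into a trivial and a hard direction, and the hard direction is where I expect \emph{the main obstacle}. Combining \ref{Sigma'} (with $\bar\Sigma=\Sigma_\sigma$) and \ref{item set} gives $\bigcup_\sigma f_\sigma(A)=\bigcup_\sigma S_\sigma\subseteq S$ immediately. For $S\subseteq\bigcup_\sigma S_\sigma$, if $\Sigma$ carries a transitive point $t$ there is nothing to do, since then $S=S_t$ by the previous paragraph. In general I would fix $s\in S$, use $s\in H^m(X)$ to pick $u^{(m)}\in\mathcal L_m(\Sigma)$ with $s\in f_{u^{(m)}}(X)$ (diameters $\to 0$), and exploit the nesting $s\in f_w(X)\Rightarrow s\in f_{w'}(X)$, where $w'$ deletes the first letter of $w$, to run König's lemma on the delete-first-letter tree; this produces a single backward address $\cdots\omega_{-1}\omega_0$ all of whose right segments are good. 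The difficulty is to realize this backward address inside \emph{one} point $\sigma\in\Sigma$ so that its good segments are honest subwords of $\sigma$ and Remark \ref{rem u<v} applies to $\mathfrak I_\sigma$. For two-sided $\Sigma$ this is painless, since one extends $\omega$ to a bi-infinite point whose orbit closure sees every subword; but the naive ``extend each $u^{(m)}$ to a point and extract a convergent subsequence'' can converge to the wrong $\sigma$, with the long good blocks escaping to infinity, so the genuine work is in controlling the selection of $\sigma$, presumably through irreducibility or transitivity of $\Sigma$.

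Part \ref{item fixed & periodic} is a direct computation of $f_\sigma(A)$ for $\sigma=vu^\infty$. For large $m$ the prefix reads $v\,u^{k}\,u_{[1,r]}$ with $r=m\bmod|u|$, so $f_{\sigma_{[1,m]}}=f_{u_{[1,r]}}\circ f_u^{\,k}\circ f_v$; since $f_u$ is a contraction with unique fixed point $x_0$, $f_u^{\,k}$ collapses to $x_0$, the innermost factor $f_v$ is washed out, and the limit set is the finite cycle $\{f_{u_{[1,r]}}(x_0):0\le r<m\}=\{x_0,f_{u_1}(x_0),\dots,f_{u_1\cdots u_{m-1}}(x_0)\}$, which equals $S_\sigma$ by \ref{item set}.

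Finally, for part \ref{item dense peiodic} I would combine Remark \ref{rem u<v} with density of periodic points in $\Sigma$. Given $s\in S$ and $\epsilon>0$, choose a long $w$ with $f_w(X)\subset B(s;\epsilon)$; density produces a periodic point of $\Sigma$ in the cylinder $[w]$, whose period $q$ (lengthened to $|q|\ge|w|$ if necessary) has $w$ as a prefix. Cyclically rotating to $\tilde q$ so that $w$ becomes a \emph{suffix} of $\tilde q$ keeps $\tilde q^\infty\in\Sigma$ periodic, and then $f_{\tilde q}=f_w\circ f_{(\cdot)}$ forces $f_{\tilde q}(X)\subseteq f_w(X)\subset B(s;\epsilon)$; hence the fixed point $\tilde x_0$ of $f_{\tilde q}$, which is a periodic point of the IFS lying in $S$ by \ref{Sigma'} and \ref{item fixed & periodic}, is within $\epsilon$ of $s$. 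Since $s$ and $\epsilon$ are arbitrary, periodic points are dense in $S$; the only point needing care is the rotation step, namely that moving the occurrence of $w$ to the tail of the period transfers the localization of $f_w(X)$ to the fixed point.
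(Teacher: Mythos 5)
Parts \ref{Sigma'}, \ref{item set}, and (4) of your proposal are correct and essentially identical to the paper's own proofs: monotonicity of the languages for \ref{Sigma'}; for \ref{item set} the same two inclusions, including the key step of completing a located subword $w=\sigma_{[p+1,p+\ell]}$ on the left to the prefix, so that $f_{\sigma_{[1,p+\ell]}}(A)=f_w\bigl(f_{\sigma_{[1,p]}}(A)\bigr)\subseteq f_w(X)\subset B(s;\epsilon)$; and for (4) the same collapse onto the cycle of $f_u$. Your part (5) is correct and in fact cleaner than the paper's: the paper deduces density of periodic points from part \ref{prop cup Sigma} together with the claim that $\lim_m S_{\sigma^{(m)}}=S_\sigma$ for periodic $\sigma^{(m)}\to\sigma$, which is false as stated (in the full $2$-shift with $f_1(x)=x/2$, $f_2(x)=(x+1)/2$, take $\sigma^{(m)}=(1^m2)^\infty\to 1^\infty$: each $S_{\sigma^{(m)}}$ contains a point near $\tfrac12$, yet $S_{1^\infty}=\{0\}$), whereas your rotation argument pins the fixed point of $f_{\tilde q}$ inside $f_w(X)$ and uses neither part \ref{prop cup Sigma} nor any continuity of $\sigma\mapsto S_\sigma$.

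Part \ref{prop cup Sigma} is where your proposal is incomplete, but the obstacle you flagged is genuine, and it is exactly where the paper's proof fails. The paper's argument is precisely the ``naive'' one you reject: pick $\sigma^{(n)}\in\Sigma$ whose initial subword is $u^{(n)}$, extract $\sigma^{(n_j)}\to\sigma$, and conclude $s\in S_\sigma$. This is broken because membership $s\in f_u(X)$ passes to \emph{suffixes} of $u$ (deleting the first letter enlarges the image), while convergence of $\sigma^{(n_j)}$ controls \emph{prefixes}; the good blocks drift off to infinity, as you said. Worse, in the paper's own framework, which admits one-sided sub-shifts without left prolongation (cf.\ its example $\{21^\infty,1^\infty\}$), the statement of part \ref{prop cup Sigma} is simply false: let $\Sigma\subset\{1,2\}^{\N}$ be the one-sided SFT with forbidden word $21$, whose points are $1^\infty$ and $1^a2^\infty$ for $a\ge 0$, and take $f_1(x)=x/2$, $f_2(x)=(x+1)/2$ on $X=[0,1]$. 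Every word has the form $1^a2^b$ and $f_{1^a2^b}(X)=[1-2^{-b},\,1-2^{-b}+2^{-a-b}]$, so $S=\{1-2^{-b}:\,b\ge 0\}\cup\{1\}$; but $S_{1^\infty}=\{0\}$ and $S_{1^a2^\infty}=\{1\}$, so $\bigcup_{\sigma\in\Sigma}S_\sigma=\{0,1\}$ misses $\tfrac12\in S$. The point that would repair this, ${}^\infty 1\,2^\infty$, exists only in the two-sided shift. So your K\"onig-plus-anchoring argument is a complete proof (and a more careful one than the paper's) in the two-sided or totally invariant setting, and your instinct that irreducibility rescues the one-sided case is also right (irreducibility lets one concatenate the suffixes $\omega_{[0]},\omega_{[-1,0]},\omega_{[-2,0]},\dots$ inside $\mathcal{L}(\Sigma)$ and pass to a limit point); but no argument can close the gap in the generality in which the proposition is stated.
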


\begin{proof}
	\begin{enumerate}	
		\item
		Since 
		$\mathcal{L}_{n}(\bar{\Sigma})\subseteq \mathcal{L}_{n}(\Sigma)$,
		the result follows from definition of $H^n$ in \eqref{eq F^n general}.
		\item
		Let $\epsilon>0$ and $N$ so large that for $n>N$, $H_\sigma^n(X)\in B_H(S_\sigma;\,\epsilon)$ where $H_\sigma$ is the Hutchinson operator for  $\mathfrak{I}_\sigma$.
		 Then for such $n$, $f_{\sigma_1\cdots \sigma_n}(X)\subseteq H_\sigma^n(X)\in\mathcal{C}$.
		 Thus $\{f_{\sigma_1\cdots \sigma_n}(A)\}_n'\subseteq\{f_{\sigma_1\cdots \sigma_n}(X)\}_n'\subseteq \{H_\sigma^ n(X)\}'=S_\sigma$
		  where for  $B$, $B'$ is the set of limit points in $(\mathcal{C},\,d_H)$.
		 				On the other hand, let
		$s \in S_{\sigma}\subseteq X$ and 	$\epsilon >0$.
		Use above remark and 
		pick
		$u \in \mathcal{L}_{n}(\Sigma_{\sigma})$
		to be any word so that 
		$f_{u}(X) \subseteq B(s;\,\epsilon)$.
		Then
		$u$
		is a subword of
		$\sigma$
		say
		$u=\sigma_{\ell} \cdots \sigma_{{\ell}+\mid u\mid}$
		for some
		$\ell$.
		Now for $A\in\mathcal{C}$, 
		$f_{\sigma_{1} \cdots \sigma_{\ell-1}u} (A)\subseteq
		f_{\sigma_{1} \cdots \sigma_{\ell-1}u} (X) \subseteq f_{u}(X) \subseteq B(s;\,\epsilon)$
		and so
		$s \in f_\sigma(A)$.
		
		\item 
		By part \ref{Sigma'} above,
		$\bigcup_{\sigma \in \Sigma}S_{\sigma} \subseteq S$.
				For the proof, we show that if $s \in S$, then there is $\sigma \in \Sigma$ such that $s \in S_{\sigma}$.
		By above remark, for $n \in \N$, there is a sequence of words $u^{(n)}$ of strictly increasing length 
		such that
		$f_{u^{(n)}}(X) \subseteq B(s,\frac{1}{n})$ and as a result
		$\lim_{n\rightarrow\infty} f_{u^{(n)}}(X)=\{s\}$.
		Pick
		$\sigma^{(n)}\in\Sigma$
		whose initial subword is
		$u^{(n)}$
		and
		choose a subsequence
		$\{\sigma^{(n_{j})}\}_{j \in \N}$
		of
		$\{\sigma^{(n)}\}_{n \in \N}$
		so that
		$\sigma^{(n_i)} \rightarrow \sigma=\sigma_1\sigma_2\cdots$.
		This is possible in a compact set $\Sigma$.
		Then, $f_{\sigma_1\cdots\sigma_m}(X)$ will meet any neighborhood of $s$ for infinitely many $m$ and so
		$s \in S_{\sigma}$.
		\item\label{item Sigma finite}
Let $\sigma=vu^\infty$.		Using part \ref{item set}, it suffices to show that $S_\sigma$ is finite. We have 
	 $$\Sigma_\sigma=\overline{\{\tau^n(vu^\infty):\; n\in\N\}}=
	 \{\tau^i(vu^\infty):\; 0\leq i\leq|v|\}\cup\{u^\infty\}$$ and so $|\Sigma_\sigma|\leq |v|+1$.
		 In particular, $\mathcal{L}_n(\Sigma_\sigma)$ as well as $H^n(X)$  are uniformly finite and so $\lim_nH^n(X)=S_\sigma$ is finite.  
		
	For the second part, let $x_{0} \in X$ be the unique fixed point of $f_u$.
	If $\sigma=u^{\infty}$ is a periodic point, then $\Sigma_\sigma$ consists of just a cycle and the proof follows by definition. 
	In fact then $x_0\in\Sigma_\sigma$ and for any $\ell\in\N$,
	$f_{u^{\ell}u_{1} \cdots u_{i}} (x_{0})= f_{u_{1} \cdots u_{i}}(f_{u^{\ell}} (x_{0}))\in \Sigma_\sigma$,  $1\leqslant i < p$. 
	When $\sigma= vu^{\infty}$, then $f_{v}(A)$ is again compact and $f_{\sigma}(A)= f_{u^{\infty}}(f_{v}(A))=S_\sigma$.
	\item\label{item dense}
	Let $s\in S$ and by applying \eqref{eq Ssigma}, choose $\sigma$ such that $s\in f_\sigma(X)$.
	Let $\{\sigma^{(m)}\}_m$ be a sequence of periodic points in $\Sigma$ with $\lim_m \sigma^{(m)}\rightarrow \sigma$.
	By part \ref{item Sigma finite}, $S_{\sigma^{(m)}}$ is a cycle in $S$ and so $S_{\sigma^{(m)}}\subseteq S$. Now the conclusion follows from the fact that $\lim_m S_{\sigma^{(m)}}=S_\sigma$ in $(\mathcal{C},\,d_H)$.
	\end{enumerate}
\end{proof}

In the case of \ref{item fixed & periodic} in the proposition above, it is important to note that having $S_\sigma$ finite does not always mean that $\Sigma$ is also finite. This distinction becomes clear when considering a situation where $X$ is a singleton and $\Sigma$ is a non-trivial sub-shift. Additionally, it is common to encounter multiple maps with the same fixed point, leading to the existence of several IFS with identical attractors, as demonstrated below.

\begin{example}\label{ex identical attractor}
	Here is a specific example that serves as a prototype for various similar cases. Consider $\mathfrak{I}_{\mathcal{F}}=(X,\,\mathcal{F},\,\Sigma)$ as an IFS, with $X=[0,\,1]$, $\Sigma$ being an SFT generated by $\mathcal{W}=\{w_1=12, w_2=123\}$, and $\mathcal{F}=\{f_1,\,f_2,\,f_3\}$ where
		$$f_1(x)=\frac14x+\frac18,\quad
	f_2(x)=\frac13x+\frac13,\quad
	f_3(x)=\frac13x+\frac14.$$
	Similarly, let $\mathfrak{I}_{\mathcal{G}}=(X,\,\mathcal{G},\,\Sigma)$ represent another IFS defined on the same space $X$ with the same sub-shift $\Sigma$, where $\mathcal{G}=\{g_1,\,g_2,\,g_3\}$ is defined such that $g_{w_1}=f_{w_1}=\frac{1}{12}x+\frac38$ and $g_{w_2}=f_{w_2}=\frac{1}{36}x+\frac38$.
	It is worth noting that there exist uncountably many such $\mathcal{G}$ where $\mathcal{G}$ is a homeomorphism. For instance, for $\frac{2}{17}<a<\frac12$, the following mappings satisfy the given conditions:
	$$g_1(x)=ax+a ,\quad g_2(x)= \frac{1}{12a}x+\frac{7}{24}\quad,\quad g_3(x)=f_3(x).$$
	In this setup, our sub-shift forms an SFT and includes a set of dense periodic points $u^\infty$, where $u$ represents a finite combination of words in $\mathcal{W}$.
	For all such $u$, it holds that $f_u=g_u$ (since $f_{w_i}=g_{w_i}$), and due to the fact that   $\mathcal{F}$ and $\mathcal{G}$ are homeomorphisms, the only periodic points within their respective attractors lie in $S_{u^\infty}$.
	This inference indicates that both attractors share an identical set of periodic points. Therefore, based on part \ref{item dense} of Proposition \ref{prop gifs} and the compactness of attractors, $\mathfrak{I}_\mathcal{F}$ and $\mathfrak{I}_\mathcal{G}$ possess identical attractors.
\end{example}

The subsequent example illustrates instances where IFS with a coded mixing sub-shift and contractive homeomorphism $\mathcal{F}$ exist without exhibiting self-similarity of all orders (refer to Proposition \ref{prop coded mixing}).

\begin{example}\label{ex rooted required}
	Consider $\mathfrak{I}_\mathcal{F}$ with the attractor $S$ as our IFS from Example \ref{ex identical attractor}.
	Given that gcd$\{|w_i|:\, w_i\in\mathcal{W}\}=1$,  $\Sigma$ is mixing. Besides, for any $u\in\mathcal{L}(\Sigma)$, let $x_u\in[0,\,1]$ represent the unique fixed point of $f_u$.
	A straightforward computation yields
	\begin{equation}\label{eq chain of fixed}
		0<x_1<x_{231}<x_{21}<x_3<x_{123}<x_{312}<x_{12}=x_{w_1}<x_2<1.
	\end{equation}
	As previously mentioned in Example \ref{ex identical attractor},
	$S=\overline{\bigcup_u S_{x_u}}$ where $u$ consists of any finite concatenation of $w_1$ and $w_2$.
		However, if any such $u$ contains $w_2$ as a sub-word, then $x_u<x_{w_1}$. This is due to the fact that $f_u$ is an orientation-preserving homeomorphism, and every point in $X$ is attracted to $x_u$ under $f_u$. Therefore, $x_{w_1}=\max S$ and on the other hand $f_2(x_{w_1})>x_{w_1}$ (refer to \eqref{eq chain of fixed}).
		Consequently, $f_2(x_{w_1})\not\in S$; or equivalently, $H(S)\neq S$, implying that $\mathfrak{I}_{\mathcal{F}}$ lacks self-similarities of all orders.
	\end{example}
	
	\subsubsection{Totally disconnected attractors}
	A finite $\Sigma$ is totally disconnected. On the other hand, when infinite and irreducible, it forms a Cantor set, also again a totally disconnected set.
	In certain scenarios, this attribute may extend to the attractor. The case of a finite $\Sigma$ was addressed in part \ref{item Sigma finite} of Proposition \ref{prop gifs}. Here, we provide some sufficient conditions for the attractor to be totally disconnected.
	An evident instance is when $X$ is totally disconnected. In addition, Hutchinson in \cite[Section 3.1]{hutchinson1981fractals} demonstrated that for $(X,\,\mathcal{F},\,\Sigma_{|\mathcal{A}|})$ with $r_i$ as the contraction ratio of $f_i\in\mathcal{F}$, if
	\begin{equation} \label{eq hut}
		\sum_{i=1}^{|\mathcal{A}|} r_{i} < 1,
	\end{equation}
	then $S$ is totally disconnected.
	Part \ref{Sigma'} of Proposition \ref{prop gifs} suggests that having \eqref{eq hut} suffices for $S$ to be totally disconnected for any $\Sigma$.
	We now introduce another condition.
	
	\begin{prop} \label{prop homeomorphism}
		Let \IFS  be an IFS, $\mathcal{F}$ a contraction and homeomorphism, and $\Sigma$ a sub-shift. Moreover, assume there exists $n\in\N$ such that for any distinct $u$, $v$ in $\mathcal{L}_n(\Sigma)$,
		\begin{equation}\label{eq f_u f_v}
			f_u(X)\cap f_v(X)=\emptyset.
		\end{equation}
		Then for $m>n$ and $u$, $v$ in $\mathcal{L}_m(\Sigma)$, \eqref{eq f_u f_v} holds true, and $S$ is totally disconnected. Furthermore, if $\Sigma$ is totally invariant ($\tau^{-1}(\Sigma)=\Sigma$), then for any $i\neq j$,
		\begin{equation}\label{eq SEmpty}
			f_i(S)\cap f_j(S)=\emptyset.
		\end{equation}
	\end{prop}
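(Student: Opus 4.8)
The statement has three parts, and I would treat them in the order given, since the later two build on the first.

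\emph{Propagation of disjointness (first assertion).} The plan is to prove directly, for every $m>n$ and every pair of distinct $u,v\in\mathcal{L}_m(\Sigma)$, that $f_u(X)\cap f_v(X)=\emptyset$. Let $p$ be the \emph{last} coordinate at which $u$ and $v$ differ, and fix a length-$n$ window $[a,a+n-1]\subseteq[1,m]$ containing $p$, which exists because $m\ge n$. Writing $u=\alpha\,\bar u\,\beta$ and $v=\alpha'\,\bar v\,\beta$, where $\bar u,\bar v\in\mathcal{L}_n(\Sigma)$ are the restrictions to the window, the composition convention \eqref{eq f_u} gives $f_u=f_\beta\circ f_{\bar u}\circ f_\alpha$, whence $f_u(X)\subseteq f_\beta\bigl(f_{\bar u}(X)\bigr)$ and likewise $f_v(X)\subseteq f_\beta\bigl(f_{\bar v}(X)\bigr)$; here the suffix $\beta$ is common to $u$ and $v$ precisely because $p$ is the last place of disagreement. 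Since $p$ lies in the window, $\bar u\neq\bar v$, so hypothesis \eqref{eq f_u f_v} yields $f_{\bar u}(X)\cap f_{\bar v}(X)=\emptyset$. The crux is that $f_\beta$, a composition of homeomorphisms, is injective, so $f_\beta\bigl(f_{\bar u}(X)\bigr)\cap f_\beta\bigl(f_{\bar v}(X)\bigr)=f_\beta\bigl(f_{\bar u}(X)\cap f_{\bar v}(X)\bigr)=\emptyset$, forcing $f_u(X)\cap f_v(X)=\emptyset$. This injectivity is exactly where the homeomorphism assumption cannot be dropped.

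\emph{Total disconnectedness.} With disjointness available at every level $m\ge n$, I would exhibit arbitrarily fine clopen partitions of $S$. For fixed $m\ge n$ the sets $\{f_u(X):u\in\mathcal{L}_m(\Sigma)\}$ are finitely many pairwise disjoint compact sets, hence mutually at positive distance; therefore each $S\cap f_u(X)$ is simultaneously closed and open in $S$, and these sets partition $S$. The contraction ratios give $\operatorname{diam} f_u(X)\le\bigl(\prod_i r_{u_i}\bigr)\operatorname{diam}X\le r_{\max}^{\,m}\operatorname{diam}X$ with $r_{\max}=\max_i r_i<1$, so the diameters of these clopen pieces tend to $0$. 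Given distinct $s,t\in S$, choosing $m\ge n$ with $r_{\max}^{\,m}\operatorname{diam}X<d(s,t)$ produces a clopen subset of $S$ containing $s$ but not $t$; hence every connected component is a singleton and $S$ is totally disconnected.

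\emph{Disjointness of $f_i(S)$ and $f_j(S)$.} Here total invariance enters to legalize appended words. Since $S\subseteq H^n(X)$, any $s\in S$ lies in some $f_u(X)$ with $u\in\mathcal{L}_n(\Sigma)$, so $f_i(s)\in f_i\bigl(f_u(X)\bigr)=f_{ui}(X)$, a piece indexed by the length-$(n+1)$ word $ui$ ending in $i$. The point is that $ui\in\mathcal{L}_{n+1}(\Sigma)$: from $\tau^{-1}(\Sigma)=\Sigma$ one obtains that any letter may be prepended to any admissible word (i.e. $aw\in\mathcal{L}(\Sigma)$ whenever $w\in\mathcal{L}(\Sigma)$, $a\in\mathcal{A}$), and prepending $u_n,u_{n-1},\dots,u_1$ in turn to the one-letter word $i$ builds $ui$ inside $\mathcal{L}(\Sigma)$. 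Consequently $f_i(S)\subseteq\bigcup\{f_w(X):w\in\mathcal{L}_{n+1}(\Sigma),\ w_{n+1}=i\}$ and similarly $f_j(S)\subseteq\bigcup\{f_w(X):w\in\mathcal{L}_{n+1}(\Sigma),\ w_{n+1}=j\}$. A word ending in $i$ differs from one ending in $j$ since $i\neq j$, and both have length $n+1>n$, so the first assertion makes the two unions disjoint, giving $f_i(S)\cap f_j(S)=\emptyset$.

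The diameter estimate and the clopen bookkeeping are routine. The genuine content is the suffix-injectivity trick of the first part and, in the last part, the verification that the appended words $ui$ remain admissible; this is the sole role of total invariance, and I expect it to be the main subtlety, since without it $f_i(S)$ could meet pieces indexed by non-admissible words, to which the disjointness established in the first part would not apply.
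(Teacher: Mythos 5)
Your first two parts are correct and essentially retrace the paper's own argument. For the propagation of disjointness, the paper runs an induction on word length, splitting into the case where the two words share their length-$n$ suffix (where injectivity of the last map enters) and the case where they do not (where nesting of images enters); your ``last disagreement inside a length-$n$ window, then push through the injective common suffix map'' combines the same two ingredients in one non-inductive step. For total disconnectedness the paper separates two points of $S$ by disjoint open neighbourhoods of unions of level-$m$ pieces; your clopen partition together with the estimate $\operatorname{diam}f_u(X)\le r_{\max}^{\,m}\operatorname{diam}X$ is the same argument, and in fact cleaner, since the paper's choice of $m$ via a Hausdorff-distance condition does not by itself force the two points into distinct pieces, whereas your diameter bound does.

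The genuine gap is in the third part, exactly at the point you flagged as the main subtlety. You read $\tau^{-1}(\Sigma)=\Sigma$ as permitting \emph{every} letter to be prepended to \emph{every} admissible word. That reading is not tenable: in the two-sided setting every subshift satisfies $\tau^{-1}(\Sigma)=\Sigma$ (the shift is a bijection of $\mathcal{A}^{\Z}$), yet arbitrary prepending is false for most subshifts; and in the one-sided setting your reading forces every word over $\mathcal{A}$ into $\mathcal{L}(\Sigma)$ (prepend the letters of $w$ one at a time to an admissible letter), i.e.\ $\Sigma$ is the full shift, so the proposition degenerates to Hutchinson's classical case and the hypothesis becomes pointless. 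The meaning the paper attaches to total invariance (see its preliminaries, and its proof of this part) is only that every word has \emph{some} left prolongation, equivalently that for each $m$ and each letter $i$ there are words in $\mathcal{L}_m(\Sigma)$ ending in $i$. Under that reading your key step fails: for $s\in f_u(X)$ with $u\in\mathcal{L}_n(\Sigma)$, the word $ui$ need not be admissible, so $f_i(s)$ lies in a piece $f_{ui}(X)$ that the first part does not control, and the inclusion $f_i(S)\subseteq\bigcup\{f_w(X):\,w\in\mathcal{L}_{n+1}(\Sigma),\ w_{n+1}=i\}$ is unjustified.

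You should also know that the paper's own proof asserts precisely this inclusion with no justification, so your diagnosis of where the difficulty sits was accurate; but the difficulty is real and cannot be patched under the weaker (intended) reading. Concretely, let $\mathcal{A}=\{1,2,3,4\}$, let $\Sigma$ be the four-point subshift formed by the orbits of $(13)^\infty$ and $(24)^\infty$ (so $\tau(\Sigma)=\Sigma$, and its two-sided version is literally totally invariant), and on $X=[0,1]$ take $f_1(x)=\tfrac{x}{10}$, $f_2(x)=\tfrac{x}{10}+\tfrac{1}{20}$, $f_3(x)=\tfrac{x}{10}+\tfrac{1}{5}$, $f_4(x)=\tfrac{x}{10}+\tfrac{69}{100}$. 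The four level-$2$ pieces $f_{13}(X)=[0.2,0.21]$, $f_{31}(X)=[0.02,0.03]$, $f_{24}(X)=[0.695,0.705]$, $f_{42}(X)=[0.119,0.129]$ are pairwise disjoint, so the disjointness hypothesis holds with $n=2$; the attractor is the union of the two $2$-cycles through $q=\tfrac{20}{99}$ (fixed point of $f_3\circ f_1$) and $r=\tfrac{139}{198}$ (fixed point of $f_4\circ f_2$), and one computes $f_1(r)=f_2(q)=\tfrac{139}{1980}$, so that $f_1(S)\cap f_2(S)\neq\emptyset$. Thus either ``totally invariant'' is taken in your strong sense, in which case your argument is valid but the statement says nothing beyond the full shift, or the third assertion requires a stronger hypothesis than either your proof or the paper's supplies.
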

\begin{proof}
	Assume \eqref{eq f_u f_v} and
let  $u=u_1u_2\cdots u_{n+1}$ and $v=v_1v_2\cdots v_{n+1}$ be two different words in $\mathcal{L}_{n+1}(\Sigma)$. Then either  $u_2\cdots u_{n+1}\neq v_2\cdots v_{n+1}$ or $u_2\cdots u_{n+1}= v_2\cdots v_{n+1}$. 
For the former,  
 $u_{2}\cdots u_{n+1}$ and $v_{2}\cdots v_{n+1}$  are two different words in $\mathcal{L}_n(\Sigma)$ and so 
\begin{align*}\label{eq reversing}
	f_u(X)\cap f_v(X)=&f_{u_{2}\cdots u_{n+1}}(f_{u_1}(X))\cap f_{v_{2}\cdots v_{n+1}}(f_{v_1}(X))\\
	\subseteq&\,f_{u_{2}\cdots u_{n+1}}(X)\cap f_{v_{2}\cdots v_{n+1}}(X)=\emptyset
\end{align*}
where the last equality follows by hypothesis. For
 the latter,  $u\neq v$ implies that $u_1\neq v_1$ and so $f_{u_1\cdots u_{n}}(X)\cap f_{v_1\cdots v_{n}}(X)=\emptyset$. But $f_{u_{n+1}}$ is a homeomorphism and so
 $$f_u(X)\cap f_v(X)=f_{u_{n+1}}(f_{u_1\cdots u_{n}}(X))\cap f_{u_{n+1}}(f_{v_1\cdots v_{n}}(X))=\emptyset.$$
Thus by induction, for $m>n$, \eqref{eq f_u f_v} holds.
To see that $S$ is totally disconnected, let $n$ be as above and let  $s_1$ and $s_2$ be two  points in $S$ with $d(s_1,\,s_2)=\epsilon>0$. We have $S=\lim_m H^m(X)$ and $S\subseteq H^m(X)$. Choose $m>n$ so that $d_H(H^m(X),\,S)<\frac{\epsilon}{2}$
and choose $u^{(i)}$ in $\mathcal{L}_m(\Sigma)$ such that $s_i\in f_{u^{(i)}}(X)$ for $i\in\{1,\,2\}$. By this choice of $m$, $u^{(1)}\neq u^{(2)}$ and by above result $f_{u^{(1)}}(X)\cap f_{u^{(j)}}(X)=\emptyset$, $j\neq 1$ where  $u^{(j)}\in\mathcal{L}_m(\Sigma)$. Since $A:=f_{u^{(1)}}$ and  $B:=\cup_{j\neq 1}f_{u^{(j)}}(X)$ are disjoint compact sets in $X$, there are open sets $V_A$ and $V_B$ such that $s_1\in A\subset V_A$, $s_2\in B\subset V_B$ with $V_A\cap V_B=\emptyset$ and $S\subseteq H^m(X)\subset V_A\cup V_B$. Thus $S$ is totally disconnected.

Now let 
$\Sigma$
be totally invariant and
$i\neq j$.
Then for any
$m \in \mathbb{N}$,
there are words in
$\mathcal{L}_{m}(\Sigma)$
terminating at
$i$ and $j$
respectively.
Choose $n$ so large to have \eqref{eq f_u f_v} satisfied for $u$ and $v$ in $\mathcal{L}_n(\Sigma)$.
 Then,
\begin{equation}
f_i(S)\cap f_j(S)\subseteq\\
\left(\bigcup_{ui\in\mathcal{L}_{n+1}(\Sigma)} f_{ui}(X)\right)\bigcap \left(\bigcup_{vj\in\mathcal{L}_{n+1}(\Sigma)} f_{vj}(X)\right)=\emptyset.
\end{equation}
 \end{proof}
Note that having $\Sigma$ totally invariant is necessary in the above proposition. For instance, if $\Sigma=\{21^\infty,\,1^\infty\}$, then for $m>1$, there are not words in $\mathcal{L}_m(\Sigma)$ terminating at 2. 

The following shows that  \eqref{eq hut} and \eqref{eq f_u f_v} are independent criteria for having a totally disconnected attractor.

\begin{example}\label{eg 2egs}
	Consider $(X,\,\mathcal{F},\,\Sigma)$ where $|\mathcal{A}|\geq 2$ and all maps have a common fixed point $s_0$. Then, regardless of the chosen $r_i$'s or the subset $\Sigma\subseteq \Sigma_{|\mathcal{A}|}$, one has $S=\{s_0\}$. Such an IFS does not satisfy \eqref{eq f_u f_v}; however, we may choose $r_i$ that satisfy \eqref{eq hut}.
	
	On the other hand, let $\Sigma$ be the full shift and let $X= I \times I$ where $I$ is the unit interval. Also, let $\mathcal{F}= \{f_{1},\, f_{2},\, f_{3}\}$ be defined as $f_i(x,\,y)=r(x-a_i,\,y-b_i)+(a_i,\,b_i)$, $\frac13\leq r<\frac12$ with fixed points $(a_i,\,b_i)$ where $(a_1,\,b_1)=(0,\,0)$, $(a_2,\,b_2)=(1,\,0)$, and $(a_3,\,b_3)=(0,\,1)$. Then, this IFS satisfies \eqref{eq f_u f_v} but not \eqref{eq hut}.
\end{example}

\subsection{Attractor as the phase space of an IFS}
We observed in Example \ref{ex 12infty} that in an IFS, the attracting set may not be preserved by maps in $\mathcal{F}$. However, if $\mathcal{F}$ preserves $S$ such as in Lemma \ref{lem coded H(S)} where $\Sigma$ is a coded shift rooted in a fixed point, then ${\mathfrak{I}_{S}=} (S,\, \mathcal{F} , \,\Sigma)$ is an IFS again. Denote IDS$_S$ and $T_S$, the associated IDS and IDS map for $\mathfrak{I}_S$ respectively.
\begin{prop} \label{prop irifs}
	Let $\mathfrak{I}=(X,\, \mathcal{F},\,\Sigma)$ and $\mathcal{F}$ contraction. Assume that $\mathcal{F}$ preserves the attractor set $S$. Then,
	\begin{enumerate}
		\item \label{tra prop}
		any point in $S$ is transitive in $\mathfrak{I}_S=(S,\,\mathcal{F},\,\Sigma)$.
		\item \label{item all s are trans}
		if $\Sigma$ is irreducible and $t\in\Sigma$ is transitive, then $(t,\, s)\in \Sigma\times S$ is a transitive point for $T_S$. In particular, if $\Sigma$ is a minimal sub-shift, then IDS$_S$ is a minimal system.
	\end{enumerate}
\end{prop}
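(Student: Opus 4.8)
The plan is to convert both assertions into statements about how well the sets $f_u(X)$ approximate points of $S$, and then read them off from Remark~\ref{rem u<v} and Proposition~\ref{prop gifs}. Throughout I use the standing hypothesis that $\mathcal{F}$ preserves $S$, i.e. $f_i(S)\subseteq S$ for every $i$, so that $f_u(S)\subseteq S$ for every $u\in\mathcal{L}(\Sigma)$ and $\mathfrak{I}_S=(S,\mathcal{F},\Sigma)$ is genuinely an IFS with attractor $S$.

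For part~\ref{tra prop}, fix $s\in S$. Invariance gives $\{f_u(s):u\in\mathcal{L}(\Sigma)\}\subseteq S$, so its closure is contained in the compact set $S$. For the reverse inclusion I would take an arbitrary $s'\in S$ and $\epsilon>0$ and invoke Remark~\ref{rem u<v}: there is a word $u\in\mathcal{L}(\Sigma)$ of large length with $f_u(X)\subset B(s';\epsilon)$. Since $\{s\}\in\mathcal{C}$ and $f_u(\{s\})=\{f_u(s)\}\subseteq f_u(X)$, this forces $f_u(s)\in B(s';\epsilon)$, so $s'$ lies in the closure of the orbit of $s$. As $s'$ and $\epsilon$ are arbitrary, $\overline{\{f_u(s):u\in\mathcal{L}(\Sigma)\}}=S$, i.e. every $s\in S$ is a transitive point of $\mathfrak{I}_S$.

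For part~\ref{item all s are trans} I would show that the forward $T_S$-orbit of $(t,s)$ meets every basic open set $U=[\sigma_1\cdots\sigma_L]\times B(s';\epsilon)$ with $\sigma\in\Sigma$ and $s'\in S$. Since $T_S^{\,n}(t,s)=(\tau^n t,\,f_{t_{[1,n]}}(s))$ and the second coordinate stays in $S$ by invariance, hitting $U$ amounts to finding $n$ with $t_{[n+1,n+L]}=\sigma_1\cdots\sigma_L$ and $f_{t_{[1,n]}}(s)\in B(s';\epsilon)$. The first coordinate is controlled by transitivity of $t$, since every word of $\mathcal{L}(\Sigma)$, in particular $\sigma_1\cdots\sigma_L$, occurs in $t$. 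For the second coordinate I would use Proposition~\ref{prop gifs}\,(\ref{prop cup Sigma}) and Remark~\ref{rem u<v} to produce a block $w\in\mathcal{L}(\Sigma)$ with $f_w(X)\subset B(s';\epsilon)$, and then realize inside $t$ an occurrence in which such a $w$ sits immediately to the left of $\sigma_1\cdots\sigma_L$; irreducibility of $\Sigma$ is the tool that should guarantee an admissible configuration $w\,\sigma_1\cdots\sigma_L\in\mathcal{L}(\Sigma)$, hence a subword of $t$. Placing $n$ at the right end of this $w$ makes the tail of $t_{[1,n]}$ equal to $w$, so $f_{t_{[1,n]}}(s)\in f_w(X)\subset B(s';\epsilon)$, while $\tau^n t$ begins with $\sigma_1\cdots\sigma_L$. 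The final claim is then immediate: when $\Sigma$ is minimal every $t\in\Sigma$ is transitive, so by the main claim every $(t,s)$ is a transitive point of $T_S$, which is exactly minimality of IDS$_S$.

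The main obstacle is precisely this coupling of the two coordinates in part~\ref{item all s are trans}. Because $f_u$ applies the \emph{last} symbol of $u$ outermost, the block that governs $f_{t_{[1,n]}}(s)$ is the \emph{immediate} left-context of the first-coordinate target $\sigma_1\cdots\sigma_L$: for any $n$ with $t_{n+1}=\sigma_1$ one necessarily has $f_{t_{[1,n]}}(s)\in f_{t_n}(S)\subseteq\bigcup_{j:\,j\sigma_1\in\mathcal{L}(\Sigma)}f_j(S)$. Thus the genuine content is to prove that the images $f_w(X)$ of the admissible left-contexts $w$ of $\sigma_1\cdots\sigma_L$ are dense in $S$. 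Irreducibility only supplies a connecting word placed \emph{between} a prescribed block and $\sigma_1\cdots\sigma_L$, and post-composing by that connector moves the image away from $s'$; so the delicate point is to arrange the connector in the far, already-contracted past and leave a long suffix $w$ directly adjacent to $\sigma_1\cdots\sigma_L$ whose image still lies in $B(s';\epsilon)$. I would attack this by combining the representation $S=\bigcup_{\rho\in\Sigma}S_\rho$ from Proposition~\ref{prop gifs} with the shrinking of $\mathrm{diam}\,f_w(X)$ for long $w$ (so that $f_w(X)$ depends only on the tail of $w$), and then using irreducibility to splice a prescribed $s'$-approximating tail into an admissible left-context of $\sigma_1\cdots\sigma_L$.
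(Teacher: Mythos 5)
Your treatment of part \ref{tra prop} is correct and is exactly the paper's own argument: invariance keeps $\{f_u(s):u\in\mathcal{L}(\Sigma)\}$ inside $S$, and Remark \ref{rem u<v} applied to the compact set $\{s\}$ gives density, since $f_u(\{s\})\subseteq f_u(X)\subseteq B(s';\epsilon)$.

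For part \ref{item all s are trans} you do not actually produce a proof, and the obstruction you isolate --- that the second coordinate $f_{t_{[1,n]}}(s)$ is governed by the \emph{immediate admissible left-context} of the target block, which irreducibility cannot control because the connecting word gets post-composed --- is not a delicacy to be engineered around: it is fatal. The statement is false under the stated hypotheses, so no splicing argument can close the gap. Take $X=[0,1]$, $\mathcal{A}=\{1,2,3\}$, $f_1(x)=\frac{x}{3}$, $f_2(x)=\frac{x}{3}+\frac{2}{3}$, $f_3(x)=\frac{1-x}{3}$, and let $\Sigma$ be the SFT with single forbidden word $21$; it is irreducible (indeed mixing: any two words can be joined by the symbol $3$), so transitive points $t$ exist. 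Here $S$ is the middle-thirds Cantor set $C$: on one hand $S\subseteq C$ since $\mathcal{L}_n(\Sigma)\subseteq\mathcal{L}_n(\Sigma_3)$ and $C$ is the conventional attractor of $\{f_1,f_2,f_3\}$ (note $f_3(C)=\frac{1}{3}(1-C)=\frac{1}{3}C$, so $f_1(C)\cup f_2(C)\cup f_3(C)=C$); on the other hand every level-$n$ Cantor cylinder $I_{e_1\cdots e_n}$ ($e_i\in\{0,2\}$, outermost digit first) equals $f_u(X)$ for some $u\in\mathcal{L}_n(\Sigma)$, by induction: if $e_1=2$ then $I_e=f_2(I_{e_2\cdots e_n})=f_{u'2}(X)$, while if $e_1=0$ then $I_e=f_1(I_{e_2\cdots e_n})=f_3(1-I_{e_2\cdots e_n})=f_3(I_{\bar e_2\cdots \bar e_n})=f_{\bar u3}(X)$ (bars denote digit complement), and appending $2$ or $3$ to an admissible word never creates $21$; hence $C\subseteq H^n(X)$ for every $n$ and $S=C$. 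In particular $f_i(S)\subseteq S$ for $i=1,2,3$, so the hypothesis that $\mathcal{F}$ preserves $S$ holds. Now consider the point $(1^\infty,1)\in\Sigma\times S$ (note $1=f_{2^n}(1)\in f_{2^n}(X)$ for all $n$, so $1\in S$). If $n\geq 1$ and $\tau^nt\in[1]$, then $t_n\neq 2$ because $21$ is forbidden, so $f_{t_{[1,n]}}(s)\in f_{t_n}(X)=[0,\frac{1}{3}]$. Thus an orbit point of $(t,s)$ lying in the open set $[1^m]\times\{y\in S:\,y>\frac{2}{3}\}$ must have $n=0$; as this holds for every $m$, transitivity of $(t,s)$ would force $t=1^\infty$, contradicting transitivity of $t$. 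So $(t,s)$ is never a transitive point of $T_S$ --- precisely your ``density of images of admissible left-contexts'' failing, since all left-contexts of the symbol $1$ end in $1$ or $3$.

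Measured against the paper, your analysis is more honest than the source: the paper disposes of part \ref{item all s are trans} with the single sentence that it is ``a trivial consequence of part \ref{tra prop}'', which is exactly the non sequitur you expose --- part \ref{tra prop} quantifies over all admissible words, while part \ref{item all s are trans} needs approximating words occurring in $t$ immediately to the left of a prescribed block. The example above also refutes downstream claims resting on this step: for instance, with $U=[1]\times\{y\in S:\,y>\frac{2}{3}\}$ one gets $T_S^n(U)\cap U=\emptyset$ for all $n\geq 1$, so the mixing/weak mixing/transitivity assertions for $T_S$ in Proposition \ref{prop IFS vs subshift} fail as well. The correct conclusion is not that a cleverer splice is needed, but that part \ref{item all s are trans} requires a genuinely stronger hypothesis --- e.g.\ that every symbol may admissibly follow every symbol, or an explicit assumption that the images $f_w(S)$ over admissible left-contexts $w$ of each word are dense in $S$.
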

\begin{proof}
	\begin{enumerate}
		\item
		Let $s \in S$ be arbitrary. We must show that
		\begin{equation}\label{eq minimal S}
			S=\overline{\{f_u(s):\;  u\in\mathcal{L}(\Sigma)\}}.
		\end{equation}
		By Remark \ref{rem u<v}, $S$ is the limit set of $\{f_{u^{(n)}}(s) \}_{n \in \mathbb{N}}$ and by the hypothesis, $f_{u^{(n)}}(s) \in S$, so \eqref{eq minimal S} holds.
		\item
		This is a trivial consequence of part \ref{tra prop}.
	\end{enumerate}
\end{proof}

The following example shows that irreducibility of $\Sigma$ is necessary but not sufficient for part \ref{item all s are trans} of Proposition \ref{prop irifs}.
\begin{example}
	Let \IFS where $\Sigma=\{1^\infty,\,2^\infty\}$ and for $i=1,\,2$, $f_i$ a contraction with $f_i(x_i)=x_i$, $x_1\neq x_2$. Then $\mathcal{F}$ preserves $S=\{x_1,\,x_2\}$ and conclusions of parts 2 and 3 in Proposition \ref{prop irifs} do not hold. However, if $x_1=x_2$, then all conclusions in that proposition are true. Here, $\Sigma$ is reducible.
\end{example}

Topological transitivity and point transitivity are equivalent in a classical dynamical system, which is not true for an IFS. In fact, in an IFS, topological transitivity always implies point transitivity \cite[Proposition 3.2]{ahmadi2023iterated}, but the converse is not necessarily the case \cite[Proposition 3.3]{ahmadi2023iterated}. The subsequent result directly follows from part \ref{tra prop} of the aforementioned proposition.

\begin{cor}
	Let $\mathfrak{I}$ and $\Sigma$ be as described in Proposition \ref{prop irifs}. Then, for $\mathfrak{I}_S=(S,\,\mathcal{F},\,\Sigma)$, topological transitivity and point transitivity are equivalent.
\end{cor}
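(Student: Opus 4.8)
The plan is to reduce the corollary directly to the two facts already available: Proposition \ref{prop irifs}\eqref{tra prop}, which guarantees that every point of $S$ is transitive for $\mathfrak{I}_S$, and the general implication for an IFS that topological transitivity implies point transitivity (the cited \cite[Proposition 3.2]{ahmadi2023iterated}). The corollary claims these two notions coincide for $\mathfrak{I}_S=(S,\,\mathcal{F},\,\Sigma)$, so I must establish both directions of the equivalence.

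First I would dispose of the forward direction. Since $\mathfrak{I}_S$ is itself an IFS (this uses the standing assumption that $\mathcal{F}$ preserves $S$, so that $(S,\,\mathcal{F},\,\Sigma)$ is a genuine IFS with $S$ compact), the general result that topological transitivity implies point transitivity in any IFS applies verbatim. Thus if $\mathfrak{I}_S$ is topologically transitive then it is point transitive; no extra work is needed beyond citing \cite[Proposition 3.2]{ahmadi2023iterated} and noting that $\mathfrak{I}_S$ qualifies as an IFS.

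The substantive direction is the converse: point transitivity of $\mathfrak{I}_S$ implies topological transitivity. Here is where Proposition \ref{prop irifs}\eqref{tra prop} does the heavy lifting, because it asserts something much stronger than mere point transitivity — it says \emph{every} point of $S$ is transitive, i.e.\ $\overline{\{f_u(s):\,u\in\mathcal{L}(\Sigma)\}}=S$ for all $s\in S$. So I would argue as follows. Let $U,\,V$ be nonempty open subsets of $S$; I must show $N(U,\,V)\neq\emptyset$, i.e.\ there is $n$ and $u\in\mathcal{L}_n(\Sigma)$ with $U\cap f_u^{-1}(V)\neq\emptyset$. Pick any point $s\in U$. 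Because $s$ is a transitive point for $\mathfrak{I}_S$, its orbit closure $\overline{\{f_u(s):\,u\in\mathcal{L}(\Sigma)\}}$ equals $S$, hence meets the open set $V$; thus there is some $u\in\mathcal{L}(\Sigma)$ with $f_u(s)\in V$. Then $s\in U\cap f_u^{-1}(V)$, which shows $N(U,\,V)\neq\emptyset$ and hence $\mathfrak{I}_S$ is topologically transitive.

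The main obstacle, such as it is, is conceptual rather than technical: one must recognize that the statement of Proposition \ref{prop irifs}\eqref{tra prop} — that \emph{all} points are transitive, not just one — is exactly the ``uniform'' hypothesis needed to upgrade point transitivity to topological transitivity, closing the gap that generally fails for an IFS. The remaining subtlety to check is purely that the definitions of $N(U,\,V)$ and of a transitive point line up correctly: a transitive point $s$ has a dense orbit, and denseness is precisely what lets the orbit meet any prescribed open $V$ starting from a point in any prescribed open $U$. Since one is free to choose the base point inside $U$, the orbit-density available at every point is what makes the return set nonempty. No delicate estimate or new construction is required; the proof is a one-line invocation of part \eqref{tra prop} in each direction, so I expect the write-up to be very short.
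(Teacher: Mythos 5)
Your proposal is correct and matches the paper's (very terse) justification: the paper states the corollary ``directly follows'' from part~1 of Proposition~\ref{prop irifs} together with the general fact, cited just before the corollary, that topological transitivity implies point transitivity in any IFS, which is exactly the two-direction argument you spell out. The only detail worth noting is that, by Remark~\ref{rem u<v}, the dense orbit of $s\in U$ meets $V$ via words of arbitrarily large length, so the witness $u$ can be taken nonempty and $N(U,V)\neq\emptyset$ indeed holds.
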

\subsubsection{ Dynamical properties of the sub-shift versus  the IFS on the attractor}
The subsequent proposition reflects the dynamical properties of a sub-shift on the attractor when $\mathcal{F}$ preserves the attractor $S$.

\begin{prop}\label{prop IFS vs subshift}
	For an IFS where $\mathcal{F}$ is a contraction and preserves $S$, $\Sigma$ has property $P \in \mathfrak{P}=\{$Devaney chaos, mixing, weak mixing, totally transitive$\}$ if and only if $T_S:\Sigma\times S\rightarrow \Sigma\times S$ has property $P$.
\end{prop}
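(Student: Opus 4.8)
The plan is to prove the two implications separately, exploiting that the first-coordinate projection $\pi_1:(\Sigma\times S,\,T_S)\to(\Sigma,\,\tau)$ is a factor map (indeed $\pi_1\circ T_S=\tau\circ\pi_1$ and $\pi_1$ is a continuous surjection). The forward implication ``$T_S$ has $P\Rightarrow\Sigma$ has $P$'' is then immediate: each property in $\mathfrak P$ is preserved under factor maps --- for mixing, weak mixing and total transitivity this is recorded right after Definition \ref{def IFS properties}, while for Devaney chaos one uses that transitivity passes to factors and that periodic points map to periodic points under $\pi_1$, so density of periodic points descends as well. The substance is the converse, for which I would lift each property from the base $(\Sigma,\tau)$ to the skew product $T_S$ using two features of the hypotheses: the maps contract uniformly, so that $\operatorname{diam} f_w(S)\le r_{\max}^{|w|}\operatorname{diam} S\to0$ with $r_{\max}=\max_i r_i<1$; and, since $\mathcal F$ preserves $S$, every point of $S$ is transitive for $\mathfrak I_S$ by Proposition \ref{prop irifs}, and $(t,s)$ is a transitive point of $T_S$ whenever $t$ is a transitive point of the (irreducible) $\Sigma$.

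The technical heart is a targeting lemma: for every word $b\in\mathcal L(\Sigma)$ and every nonempty relatively open $B\subseteq S$ there exist arbitrarily long words $e\in\mathcal L(\Sigma)$ with $f_e(S)\subseteq B$ and $eb\in\mathcal L(\Sigma)$. I would prove this by fixing a transitive point $t$ of $\Sigma$ and a base point $s_0\in S$, so that the $T_S$-orbit of $(t,s_0)$ is dense in $\Sigma\times S$. Intersecting this orbit with the open set $[b]\times B$ shows that among the positions $m$ where $b$ begins in $t$ (equivalently $\tau^{m}t\in[b]$) the ``centers'' $f_{t_{[1,m]}}(s_0)$ come arbitrarily close to any prescribed point of $B$; taking such an $m$ large, so that $\operatorname{diam} f_{t_{[1,m]}}(S)$ is tiny by uniform contraction, and setting $e=t_{[1,m]}$ gives $f_e(S)\subseteq B$, while $eb$ is a prefix of $t$ and hence admissible.

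With the targeting lemma, mixing lifts cleanly. Writing the basic open sets as $U=[a]\times B_1$ and $V=[b]\times B_2$, one computes $T_S^{n}(\sigma,x)=(\tau^{n}\sigma,\,f_{\sigma_{[1,n]}}(x))$, so $n\in N_{T_S}(U,V)$ exactly when some word $q$ of length $n$ starts with $a$, satisfies $qb\in\mathcal L(\Sigma)$, and has $f_q(B_1)\cap B_2\neq\emptyset$. Choosing a $b$-compatible landing word $e$ for $B_2$ (of length $\ell$) and applying mixing of $\Sigma$ to the pair $(a,\,eb)$ produces, for every sufficiently large $j$, a connector $w$ with $|w|=j$ and $a\,w\,e\,b\in\mathcal L(\Sigma)$; then $q=awe$ has $f_q(B_1)\subseteq f_e(S)\subseteq B_2$ because $\mathcal F$ preserves $S$, so every $n=|a|+j+\ell$ beyond a threshold lies in $N_{T_S}(U,V)$, making it cofinite. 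The same construction shows that, up to a bounded additive shift, $N_{T_S}(U,V)$ contains the set of symbolic return times of $\Sigma$ from $[a]$ to $[eb]$, so weak mixing (thickness of the return set) and total transitivity (run the argument over each higher power $\Sigma^{N}$, whose IFS has the same attractor $S$ and to which Proposition \ref{prop irifs} applies) transfer verbatim; here the connecting property of $\Sigma$ directly outputs the single admissible word $aweb$, so no delicate gluing is needed.

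For Devaney chaos it remains to lift density of periodic points, and this is where I expect the main difficulty. A point $(\sigma,x)$ is $T_S$-periodic iff $\sigma=u^{\infty}$ is periodic and $x=x_u$ is the (unique, by contraction) fixed point of $f_u$; moreover $x_u\in f_y(S)$ whenever $y$ is a suffix of $u$, so a long landing suffix pins the fibre coordinate. To approximate a target $(\sigma_0,s_0)$ inside $[c]\times B(s_0,\rho)$ I would combine density of periodic cycles in $S$ (Proposition \ref{prop gifs}\eqref{item dense peiodic}) with a $c$-compatible landing word $y$ (the targeting lemma with $b:=c$, $B:=B(s_0,\rho)$) and build a period word of the form $u=c\,w\,y$ with $cwy,\,yc\in\mathcal L(\Sigma)$, so that $u^{\infty}\in[c]$ and $x_u\in f_y(S)\subseteq B(s_0,\rho)$. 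The obstacle is genuinely making $u^{\infty}$ an admissible point: pairwise admissibility of the junctions $cwy$ and $yc$ forces all subwords of length $\le|u|$ of $(cwy)^{\infty}$ to be admissible only when $\Sigma$ is an SFT or the landing word can be chosen synchronizing, so for a general subshift with dense periodic points I would instead choose $y$ as a long block occurring just before an occurrence of $c$ inside an already-periodic point furnished by the density hypothesis, arranging the fibre pin and the prefix $c$ simultaneously. Verifying that this last coordination can always be carried out under the stated hypotheses is the step I would need to justify most carefully.
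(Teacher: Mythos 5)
Your overall strategy is the same as the paper's: the downward implication via the projection factor map, and the upward implication via a $T_S$-transitive point (Proposition \ref{prop irifs}) combined with uniform contraction and the symbolic hypothesis; your targeting lemma is precisely the paper's device of choosing a prefix $b$ of a transitive point with $f_b(S)\subseteq W$, and your mixing/weak mixing/totally transitive cases are complete and correct as written. The Devaney step you flagged does close, and by exactly your fallback plan, with no SFT or synchronizing-word assumption. Apply your targeting lemma with $b:=c$ and $B:=W$: it produces $e$ with $f_e(S)\subseteq W$ and $ec\in\mathcal{L}(\Sigma)$ (both occur consecutively in the transitive point, so $[ec]$ is a nonempty open subset of $\Sigma$). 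Density of periodic points in $\Sigma$ then yields a periodic $\pi\in[ec]$. Now rotate: let $N\geq|e|$ be a multiple of the period of $\pi$ and put $\pi':=\tau^{|e|}\pi$. Then $\pi'\in[c]$, $\tau^N\pi'=\pi'$, and the period word $w:=\pi'_{[1,N]}$ ends in $e$, since $\pi'_{[N-|e|+1,\,N]}=\pi_{[N+1,\,N+|e|]}=\pi_{[1,\,|e|]}=e$ by periodicity. Writing $w=pe$, the unique fixed point $x_w$ of $f_w=f_e\circ f_p$ lies in $f_e(f_p(S))\subseteq f_e(S)\subseteq W$ (preservation of $S$ gives $f_p(S)\subseteq S$, and $x_w\in S$), so $(\pi',x_w)$ is a $T_S$-periodic point in $[c]\times W$. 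The admissibility obstruction you describe (that $cwy,\,yc\in\mathcal{L}(\Sigma)$ need not make $(cwy)^\infty$ a point of $\Sigma$) afflicts only your first construction; the fallback never concatenates anything, it only rotates a periodic point that density already provides.

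One further remark: your insistence that the landing word \emph{precede} the cylinder word ($eb\in\mathcal{L}(\Sigma)$) is not cosmetic --- it is what makes the shift coordinate and the fibre coordinate arrive at the same moment, and it is exactly the point at which the paper's own write-up is loose. In the paper's Devaney argument the landing word $b$ \emph{begins} with $v$, and the periodic point produced is $\left((qb)^\infty,\,x_0\right)$ with $x_0\in W$; but $(qb)^\infty$ begins with $q$ and need not lie in $[v]$, while the rotation $\left((bq)^\infty,\,f_q(x_0)\right)$ does lie over $[v]\subseteq V$ but its fibre coordinate need not lie in $W$. A similar phase mismatch occurs in the paper's mixing computation, where after applying $f_{wu_2u_2'}$ the shift coordinate has already moved past the start of $u_2$. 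So your version, completed as above, is not merely equivalent to the paper's proof; it supplies the alignment that the paper's argument tacitly needs.
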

\begin{proof}
	The sufficiency is evident, so a proof for necessity will be given. Assume $\Sigma$ has Devaney chaos. By part \ref{item all s are trans} of Proposition \ref{prop irifs}, we only need to prove that periodic points are dense in
	$\Sigma \times S$. Taking an arbitrary open set $V \times W$ in $\Sigma \times S$ and $[v] \subseteq V$ for some word $v \in \mathcal{L}_{n}(\Sigma)$, choose a transitive point $t=vt_{n+1}\cdots$ in $[v]$ and find $b=vt_{n+1} \cdots t_{\ell}$ such that $f_{b}(S) \subseteq W$. Given the density of periodic points in $\Sigma$, there exists $u'^{\infty} \in [b]$. Set $u':=bq \in \mathcal{L}_{m}(\Sigma)$ and $\sigma:=u^\infty= \tau^{|b|}(u'^{\infty})=[qb]^\infty$. Consequently, $f_u(S)=f_{b}(f_q(S)) \subseteq W$. By another part of Proposition \ref{prop gifs}, the cycle $S_{\sigma} =\{x_0, f_{u_{1}}(x_0), \ldots ,f_{u_{1} \ldots u_{m-1}}(x_0)\}\subseteq S$ of length $m$ contains $x_{0}$ and thus $x_{0} \in W$ with $f_u(x_0)= x_{0}$.
	
	For the other properties in $\mathfrak{P}$, consider $[u_i]\times V_i$ as an open set in $\Sigma\times S$ for $i=1,\,2$ and let $t^{(i)}\in[u_i]$ be transitive. 
		Then, there is $u_i'$ such that $u_iu_i'$ is the initial of $t^{(i)}$ and $f_{u_iu_i'}(S)\subset V_i$.
		 Given $w$ such that $u_1u'_1\,w\,u_2u'_2\in\mathcal{L}(\Sigma)$, there is $s\in f_{u_1u_1'}(S)\subset V_1$ such that $f_{w\,u_2u'_2}(s)\in f_{u_2u'_2} (S)\subset V_2$. This implies $T^{l}_S(V_1)\cap V_2\neq\emptyset$ where $l=|w\,u_2u'_2|$. Now, if $\Sigma$ is mixing, there exists $M\in\N$ such that for $l\geq M$ and some $w\in\mathcal{L}_l(\Sigma)$, $u_1u'_1\,w\,u_2u'_2\in\mathcal{L}(\Sigma)$, leading to the conclusion that $T_S$ is mixing. Similar reasoning applies for the other properties.
\end{proof}

Recall that an IFS has Devaney chaos if it is transitive and possesses a dense collection of periodic points. Thus, the above proposition directly implies the following.

\begin{cor}\label{cor property}
	Assume $\mathcal{F}$, \IFS and $\mathfrak{P}$ are as in Proposition \ref{prop IFS vs subshift}. 
	If $\Sigma$ has property $P\in\mathfrak{P}$, then    $\mathfrak{I}_S=(S,\,\mathcal{F},\,\Sigma)$ has property $P$ as well.
	\end{cor}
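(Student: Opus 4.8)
The plan is to deduce Corollary \ref{cor property} directly from Proposition \ref{prop IFS vs subshift} by observing that each property $P\in\mathfrak{P}$, when it holds for the induced system $T_S$ on $\Sigma\times S$, transfers to the IFS $\mathfrak{I}_S=(S,\,\mathcal{F},\,\Sigma)$ via the natural projection onto the second coordinate. Concretely, Proposition \ref{prop IFS vs subshift} already gives the equivalence ``$\Sigma$ has $P$ $\iff$ $T_S$ has $P$''; so the only remaining link in the chain is ``$T_S$ has $P$ $\implies$ $\mathfrak{I}_S$ has $P$''. The first thing I would do is make precise the relationship between the dynamics of $T_S:\Sigma\times S\to\Sigma\times S$ and the IFS dynamics on $S$: the return-time sets $N(U,\,V)$ of Definition \ref{def IFS properties} for $\mathfrak{I}_S$ are governed exactly by the iterates of $T_S$ restricted to the second coordinate, since $T_S^n(\sigma,\,s)=(\tau^n(\sigma),\,f_{\sigma_1\cdots\sigma_n}(s))$ realizes the admissible composition $f_u$ with $u=\sigma_1\cdots\sigma_n\in\mathcal{L}_n(\Sigma)$.

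The key steps, in order, are as follows. First I would record that the canonical projection $\pi_2:\Sigma\times S\to S$ intertwines $T_S$ with the IFS action, in the sense that $\pi_2(T_S^n(\sigma,\,s))=f_{\sigma_1\cdots\sigma_n}(s)$, so that any $T_S$-orbit projects to an admissible IFS-orbit in $S$. Second, for transitivity and the mixing hierarchy I would translate a hit $T_S^{\,l}(U'\times V_1)\cap(U''\times V_2)\neq\emptyset$ into the IFS return condition $V_1\cap f_u^{-1}V_2\neq\emptyset$ for the corresponding word $u\in\mathcal{L}_l(\Sigma)$, so that mixing, weak mixing, and total transitivity of $T_S$ force the analogous control on $N(V_1,\,V_2)$ for $\mathfrak{I}_S$. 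Third, for Devaney chaos I would note that a periodic point $(\sigma,\,s)$ of $T_S$ with $\sigma=u^\infty$ and $f_u(s)=s$ projects to a point $s\in S$ that is periodic for $\mathfrak{I}_S$ in the sense of the periodic-point definition preceding this subsection, and that density of periodic points of $T_S$ in $\Sigma\times S$ yields density of IFS-periodic points in $S$ by projecting. Much of this is in fact already carried out inside the proof of Proposition \ref{prop IFS vs subshift}, where transitive points $t^{(i)}$ and words $u_iu_i'$ with $f_{u_iu_i'}(S)\subset V_i$ are produced and a word $w$ with $u_1u_1'\,w\,u_2u_2'\in\mathcal{L}(\Sigma)$ is used to connect the two open sets; the same construction reads off as a return-time statement for $\mathfrak{I}_S$ once one discards the $\Sigma$-coordinate.

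I expect the remark preceding the statement, namely that Devaney chaos for an IFS is defined as transitivity plus a dense set of periodic points, to do most of the conceptual work: it tells us precisely which two ingredients to extract from the $T_S$-dynamics. Given that the hard analytic content—finding the connecting word $w$ and the intermediate point $s\in f_{u_1u_1'}(S)$ with $f_{w\,u_2u_2'}(s)\in V_2$—was already supplied in the proof of Proposition \ref{prop IFS vs subshift}, the corollary is essentially a matter of projecting that argument to the second factor and invoking the one-directional implication ``$P$ on $\Sigma$ $\implies$ $P$ on $T_S$ $\implies$ $P$ on $\mathfrak{I}_S$''. The main (and only mild) obstacle will be bookkeeping the correspondence between the two notions of ``periodic point'' for the chaos case: one must check that a point which is periodic for $T_S$ projects to a point satisfying $f_u(x_0)=x_0$ for some admissible $u$ with $u^\infty\in\Sigma$, which is exactly the IFS periodicity definition, so no genuine difficulty arises. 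I would therefore keep the proof short, citing Proposition \ref{prop IFS vs subshift} for the transfer $\Sigma\to T_S$ and then projecting via $\pi_2$ to obtain the property for $\mathfrak{I}_S$.
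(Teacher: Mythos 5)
Your proposal is correct and takes essentially the same route as the paper: the paper offers no separate proof, asserting only that Corollary \ref{cor property} follows directly from Proposition \ref{prop IFS vs subshift}, and your chain ``$P$ on $\Sigma$ $\Rightarrow$ $P$ on $T_S$ $\Rightarrow$ (project to the second factor) $P$ on $\mathfrak{I}_S$'' is precisely that intended direct implication, with the constructions needed for the return-time sets and for dense periodic points already supplied inside the proposition's proof. The periodic-point bookkeeping you flag is immediate, since the paper defines IFS-periodicity through the IDS map $T$ itself, so a $T_S$-periodic point projects verbatim to an IFS-periodic point of $\mathfrak{I}_S$.
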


The IFS with a trivial attractive set in Example \ref{eg 2egs} consists of $S=\{s_0\}$. Regardless of the choice of $\Sigma$, $\mathfrak{I}_S$ is mixing. Therefore, the converse of the above corollary is not generally valid.

\vskip .35cm
\noindent
{\bf Conclusion.}
 Let $\mathfrak{I}=(X,\,\mathcal{F},\,\Sigma)$, where $\Sigma$ is an arbitrary sub-shift, and $\mathcal{F}$ is a homeomorphism. We demonstrated that self-similarities can emerge in the attractor of $\mathfrak{I}$ when $\Sigma$ is coded, regardless of the choice of $\mathcal{F}$. However, the identification of self-similarities in other types of sub-shifts, particularly minimal sub-shifts, is not known. A key issue is to have $\mathcal{F}$ preserving the attractor. Example \ref{ex 12infty} highlights the potential challenge of this preservation when $\Sigma$ represents a minimal non-fixed cycle. In contrast, Proposition \ref{prop IFS vs subshift}, as a classical dynamical system, must have minimal sub-systems that result in IFS with minimal sub-shifts. The structure of these minimal sub-shifts evades our current understanding.
	\bibliographystyle{plain}
\bibliography{myreference}

\end{document}